\newcommand{\set}[2]{\left\lbrace #1 \middle| #2 \right\rbrace}
\tikzset{
  shaded/.style = {fill=red!10!blue!20!gray!30!white},
  unshaded/.style = {fill=white},
  shadedw/.style = {fill=white},
  shadedb/.style = {fill=blue!120!gray!30!white},
  shadedr/.style = {fill=red!120!gray!30!white},
  shadedg/.style = {fill=green!120!gray!30!white},
  shadedy/.style = {fill=yellow!120!gray!30!white},
  Tcirc/.style = {circle, draw, thick, fill=white, opaque},
  Tellip/.style = {ellipse, draw, thick, fill=white, opaque},
  Tbox/.style = {rounded corners,rectangle, draw, thick, fill=white, opaque},
	align/.style = {scale=.7,baseline},
    align.7/.style = {scale=.7, baseline},
    align1/.style = {scale=1.05, baseline},
    align1.5/.style = {scale=1.5,baseline},
  every picture/.style=semithick
}
\tikzset{
  knot diagram/every strand/.append style={black, thick},
  stock/.style={consider self intersections=true, end tolerance=1pt, clip width=5pt, clip radius=5pt},
  stockthick/.style={consider self intersections=true, end tolerance=1pt, clip width=3pt, clip radius=20pt},
  shaded/.style = {fill=red!10!blue!20!gray!30!white},
  unshaded/.style = {fill=white},
}
\pgfplotsset{compat=1.17}
\newtheorem{thm}{Theorem}[section]
\newtheorem{prop}[thm]{Proposition}
\newtheorem{lem}[thm]{Lemma}
\newtheorem{defn}[thm]{Definition}
\newtheorem{exmp}[thm]{Example}
\title{An algebra structure for reproducing kernel Hilbert spaces}
\author{Dimitrios Giannakis}
\author{Michael Montgomery\thanks{Corresponding author. Email: \href{mailto:michael.r.montgomery@dartmouth.edu}{michael.r.montgomery@dartmouth.edu}.}}
\affil{Department of Mathematics, Dartmouth College, Hanover, NH 03755, USA}
\date{}
\begin{document}
\maketitle

\begin{abstract}
    Reproducing kernel Hilbert spaces (RKHSs) are Hilbert spaces of functions where pointwise evaluation is continuous. There are known examples of RKHSs that are Banach algebras under pointwise multiplication. These examples are built from weights on the dual of a locally compact abelian group. In this paper we define an algebra structure on an RKHS that is equivalent to subconvolutivity of the weight for known examples (referred to as reproducing kernel Hilbert algebras, or RKHAs). We show that the class of RKHAs is closed under the Hilbert space tensor product and the pullback construction on the category of RKHSs. The subcategory of RKHAs becomes a monoidal category with the spectrum as a monoidal functor to the category of topological spaces. The image of this functor is shown to contain all compact subspaces of $\mathbb R^n$ for $n>0$.
\end{abstract}

\section{Introduction}

In \cite{DG23,DGM23}, the authors investigate a class of reproducing kernel Hilbert spaces (RKHS) on compact abelian groups which are Banach algebras under pointwise multiplication and an equivalent norm.
These RKHSs are built with an inverse weight function $\lambda \colon \hat{G} \to \mathbb R_{>0}$ on the (discrete) dual group $\hat G$ of a compact abelian group $G$ which is subconvolutive (see \cite{F79}).
\begin{equation} \lambda * \lambda (\gamma) \leq C \lambda(\gamma) \quad\quad \forall \gamma \in \hat{G} \tag{Subconvolutivity}
\end{equation}
It was shown that $\norm{fg}_{\mathcal H} \leq \tilde C \norm{f}_{\mathcal H}\norm{g}_{\mathcal H}$ for these examples on $G$ and any RKHS $\mathcal H$ satisfying this condition was called a reproducing kernel Hilbert algebra (RKHA). A related construction is a class of RKHSs on locally compact abelian groups studied in \cite{FeichtingerEtAl07} that generalizes the notion of harmonic Hilbert spaces on $\mathbb R$ \cite{Delvos97}. For appropriately chosen weights ($\lambda^{-1}$ is subadditive and submultiplicative) these RKHSs were shown to be Banach algebras under pointwise multiplication. RKHAs are also related to Sobolev algebras and, by Fourier duality, to weighted convolution algebras on the dual group, both of which are fields with a long history of study; e.g., \cite{Domar56,Nik70,Brandenburg75,F79,Kuznetsova06,BrunoEtAl19}. In \cite{Nik70}, an $L^p$ version of subconvolutivity on $\mathbb R^{2n}$ and the corresponding weighted $L^p$ spaces are studied. Positive weights $\nu$ on $\mathbb R^{2n}$ satisfying the subconvolutivity condition
$$C_{p,\nu}= \sup_x\left(\int_{\mathbb R^{2n}} \left(\frac{\nu(x)}{\nu(y)\nu(x-y)}\right)^{p'} dy\right)^{1/p'} < \infty \quad p \in [1, \infty], \, \frac{1}{p}+\frac{1}{p'} =1$$
called Nikolskii-Wermer weights are shown to yield Banach convolution algebras $L^p_\nu (\mathbb R^{2n})$.

In this article we present a stronger definition of RKHA which includes the examples investigated in \cite{DG23,DGM23,FeichtingerEtAl07}. We define an RKHA to be an RKHS such that pointwise multiplication extends to a bounded operator $m \colon \mathcal H \otimes \mathcal H \to \mathcal H$. This definition is compatible with the tensor product from \textbf{Hilb} and the spectrum as a functor from \textbf{BanAlg} to \textbf{Top}. Furthermore, this stronger definition is equivalent to subconvolutivity of the weight for examples built from locally compact abelian groups and a weight; see theorems~\ref{Example 1} and~\ref{Example 2}.

In the last section we consider the category of RKHAs and its compatibility with the Hilbert space tensor product, sum, pullback, and pushout constructions. The category of RKHAs is shown to be a monoidal category when equipped with the Hilbert space tensor product and the spectrum is a monoidal functor from \textbf{RKHA} to \textbf{Top}; see theorem~\ref{thm:spec}. In \cite{DGM23}, the Gelfand-Raikov-Shilov condition
\begin{equation}
    \label{eq:GRS}
    \lim_{n \to \infty} \lambda(n\gamma)^{1/n} = 1 \quad \quad \forall \gamma \in \hat{G} \tag{GRS}
\end{equation}
is necessary to show that the spectrum of $\mathcal H_\lambda$ from theorem \ref{Example 1} recovers the group $G$. In theorem~\ref{thm:GRS} this is extended to certain locally compact groups for the examples in theorem \ref{Example 2}.
We show that the Beurling-Domar condition \cite{Domar56}
\begin{equation}
    \label{eq:BD}
    \sum_{n=1}^\infty \dfrac{\ln(\lambda^{-1}(n\gamma))}{n^2} < \infty \quad \quad \forall \gamma \in \hat{G} \tag{BD}
\end{equation}
is equivalent to a spectral condition when $\mathcal H_\lambda$ is built from weights on locally compact abelian groups. Combining these results with theorem~\ref{thm:spec} it follows that the image of the spectrum functor on \textbf{RKHA} contains all compact subspaces of $\mathbb R^n$. Other results in that direction can be found in \cite{Tchamitchian84,Tchamitchian87}, who showed existence of compactly supported functions for a class of weighted $L^2$ spaces on $\mathbb R^n$ with Banach algebra structure under pointwise multiplication built from rapidly decreasing radial weights. Finally, in this paper we study aspects of Banach algebra quotients of RKHAs.

\subsection{Results from RKHS theory}

We start with basic notions in the theory of reproducing kernel Hilbert spaces. For a more detailed overview see \cite{PR16}. Letting $X$ be a topological space, $\mathcal L(X)$ will denote the collection of complex valued functions on $X$.
\begin{defn}
A function $k\colon X \times X \to \mathbb C$ is called a \textbf{positive definite kernel} if
\begin{enumerate}[(i)]
\item $k(x,y)=\overline{k(y,x)}$
\item For all $c_1, ..., c_n \in \mathbb C$ and $x_1,...,x_n \in X$ distinct, $\sum_{i,j} c_i k(x_i,x_j)\overline{c_j} \geq 0$.
\end{enumerate}
\end{defn}

A kernel is \textbf{strictly positive definite} if $\sum_{i,j} c_i k(x_i,x_j)\overline{c_j} > 0$ whenever at least one of the $c_i$ is nonzero.

A reproducing kernel Hilbert space or RKHS on a space $X$ is a vector subspace $\mathcal H \subset \mathcal L(X)$ equipped with and inner product making it a Hilbert space such that pointwise evaluation is continuous. By the Riesz representation theorem a positive definite kernel can be assigned to every RKHS.
$$ev_x(\xi)=\braket{\xi}{ev_x} \quad \quad k(x,y) = \braket{ev_y}{ev_x}$$
Furthermore, due to Moore and Aronszajn there is an equivalence between positive definite kernels and RKHSs on a space.

\begin{thm}{\cite{Aronszajn50}}
    Let $X$ be a set and $k$ a positive definite kernel on $X$. Then there exists a unique reproducing kernel Hilbert space $\mathcal H \subset \mathcal L(X)$ with $k$ as its kernel.
\end{thm}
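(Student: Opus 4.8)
The plan is to give the standard Moore--Aronszajn construction: build a candidate space explicitly from $k$, verify it is an RKHS with kernel $k$, and then argue uniqueness. First I would set $k_y := k(\cdot, y) \in \mathcal L(X)$ for each $y \in X$ and let $\mathcal H_0 \subset \mathcal L(X)$ be their linear span. On $\mathcal H_0$ define a sesquilinear form by $\braket{k_x}{k_y} := k(y,x)$ and extend; conjugate symmetry of this form is exactly condition~(i), and positive semidefiniteness on a finite combination $f = \sum_i c_i k_{x_i}$ reduces to condition~(ii). The identity $\braket{f}{k_y} = f(y)$ for $f \in \mathcal H_0$ holds by construction, which is the reproducing property at the algebraic level.

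Next I would upgrade this form to a genuine inner product. Applying Cauchy--Schwarz to the positive semidefinite form gives $\abs{f(y)}^2 = \abs{\braket{f}{k_y}}^2 \le \braket{f}{f}\, k(y,y)$, so any $f$ with $\braket{f}{f} = 0$ vanishes pointwise; hence the form has trivial null space and $(\mathcal H_0, \braket{\cdot}{\cdot})$ is a pre-Hilbert space.

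Then comes the main work: complete $\mathcal H_0$ to an abstract Hilbert space and realize its elements as honest functions on $X$. Here I would use that each evaluation functional $f \mapsto f(y)$ is bounded on $\mathcal H_0$ by $\sqrt{k(y,y)}$, so every $\braket{\cdot}{\cdot}$-Cauchy sequence converges pointwise on $X$; I would then define $\mathcal H$ to be the set of such pointwise limits, equipped with the induced inner product. The point requiring care is that the map from the abstract completion to this function space is injective, i.e.\ a sequence converging to $0$ in norm also converges pointwise to the zero function --- which again follows from the uniform-in-each-point bound above. One then verifies that $k_y \in \mathcal H$, that evaluation remains continuous on $\mathcal H$, and that the reproducing identity $\braket{f}{k_y} = f(y)$ persists in the limit, so $k$ is the reproducing kernel of $\mathcal H$.

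Finally, for uniqueness I would take any RKHS $\mathcal G \subset \mathcal L(X)$ with kernel $k$. By the Riesz representation recalled above, $ev_y$ is represented by a vector whose associated function is $k_y$, so each $k_y \in \mathcal G$ and $\braket{k_x}{k_y}_{\mathcal G} = k(y,x)$; thus $\mathcal G$ restricts to $\mathcal H_0$ with the same inner product. The span $\mathcal H_0$ is dense in $\mathcal G$, since any $g$ orthogonal to every $k_y$ satisfies $g(y) = \braket{g}{k_y}_{\mathcal G} = 0$ and hence $g = 0$. As both $\mathcal H$ and $\mathcal G$ are completions of the same pre-Hilbert space sitting inside $\mathcal L(X)$, the inclusion identifies them as the same space of functions carrying the same norm. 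I expect the only genuinely delicate step to be the injectivity of the completion-to-functions map in the third paragraph; every other step is a direct consequence of conditions~(i)--(ii) and the reproducing identity.
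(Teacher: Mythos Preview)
The paper does not supply its own proof of this statement; it is quoted as the classical Moore--Aronszajn theorem with a citation to \cite{Aronszajn50} and to \cite{PR16} for background. Your proposal is the standard Moore--Aronszajn construction and is correct, so there is nothing to compare against in the paper itself.
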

The corresponding RKHS is commonly denoted $\mathcal H(k)$. Reproducing kernel Hilbert spaces also come with many constructions. See \cite{PR16} for more details. Let $\mathcal H_i \subset \mathcal L(X_i)$ be RKHSs with kernels $k_i$. Then
\begin{enumerate}[(i)]
\item $\mathcal H_1 \otimes \mathcal H_2 \subset \mathcal L(X_1 \times X_2)$ is an RKHS with kernel $k((x_1,x_2),(y_1,y_2))=k_1(x_1,y_1)k_2(x_2,y_2)$.
\item $\mathcal H_1 \oplus \mathcal H_2 \subset \mathcal L(X_1 \sqcup X_2)$ is an RKHS with kernel $k(x,y)=\left\lbrace\begin{array}{ccc} k_1(x,y) & \text{if } x,y \in X_1\\ k_2(x,y) & \text{if } x,y \in X_2\\ 0 & \text{else} \end{array}\right.$.
\item Let $\phi \colon S \to X$. Then $k \circ \phi$ is a positive definite kernel and the pullback $\mathcal H(k \circ \phi) = \set{f \circ \phi}{f \in \mathcal H} \subset \mathcal L(S)$ is an RKHS with $\norm{\xi}_{\mathcal H(k \circ \phi)} = \displaystyle\inf_{f, \xi=f \circ \phi} \norm{f}_\mathcal H$. When $\phi$ is an inclusion map for $S \subset X$, we denote $\mathcal H(k \circ \phi) = \mathcal H(S) \cong \overline{\text{span}\set{k_x}{x \in \phi(S)}}$. Equivalently, we have $\mathcal H(S) \cong \set{f\in \mathcal H}{f|_S = 0}^\perp$.
\item Let $\phi \colon X \to S$. Consider the closed subspace
$$\tilde{\mathcal H} = \set{f \in \mathcal H(k)}{f(x_1)=f(x_2) \text{ whenever } \phi(x_1) = \phi(x_2)}$$
and its kernel $\tilde{k} \colon X \times X \to \mathbb C$ given by the orthogonal projection onto $\tilde{\mathcal H}$, $\tilde{k}_x = P_{\tilde{\mathcal H}}(k_x)$. Then $k_\phi \colon S \times S \to \mathbb C$ by $k_\phi(s,t)=\tilde{k}(\phi^{-1}(s),\phi^{-1}(t))$ and $k_\phi(s,t) = 0$ if $\phi^{-1}(s), \phi^{-1}(t) = \emptyset$ is a well defined positive definite kernel and the pushout of $\mathcal H(k) \subset \mathcal L(X)$ along $\phi$ is given by $\mathcal H(k_\phi) \subset \mathcal L(S)$.
\item Let $\phi \colon X \to H$ where $H$ is a Hilbert space. Such a map $\phi$ is called a \textbf{feature map} and induces an RKHS $\mathcal H(\tilde k_\phi)$ on $X$ with $\tilde k_\phi \colon X \times X \to \mathbb C$ by $\tilde k_\phi(x, y) = \braket{\phi(y)}{\phi(x)}_H$.
\item If $X_1 = X_2$, then $k_1+k_2$ is a positive definite kernel and $\mathcal H(k_1+k_2) = \set{f_1+f_2}{f_i \in \mathcal H_i}$ is the pointwise sum RKHS.
\item If $X=X_1 = X_2$, let $\Delta \colon X \to X \times X$ by $\Delta(x) = (x,x)$. Then $\mathcal H_1 \odot \mathcal H_2 = \mathcal H_1 \otimes \mathcal H_2 (\Delta(X))$ is the pointwise product RKHS with kernel $k(x,y) = k_1(x,y)k_2(x,y)$.
\end{enumerate}

The pullback and pushout constructions have an unfortunate name in this context as they are not pushouts or pullbacks in the categorical sense.

\begin{defn}
Let \textbf{RKHS} denote the following category.
\begin{enumerate}[(i)]
\item Ob(\textbf{RKHS}) is the collection of reproducing kernel Hilbert spaces.
\item $T \in Mor(\mathcal H_1,\mathcal H_2)$ if $T \in B(\mathcal H_1,\mathcal H_2)$ and $T(k^1_x)=k^2_{F(x)}$ for some map $F \colon X_1 \to X_2$.
\end{enumerate}
\end{defn}

\textbf{RKHS} is equipped with two bifunctors, $\otimes$ and $\oplus$. The tensor bifunctor and identity RKHS $\mathbb C \subset \mathcal L(\{p\})$ makes $\textbf{RKHS}$ a monoidal category. The proof is identical to the proof that the category of Hilbert spaces with bounded linear maps is a monoidal category. For a definition of monoidal categories see \cite{EGNO15}. The pullback corresponds to the existence of an object, $\mathcal H(k \circ \phi)$, and an isometric morphism $T_\phi \colon \mathcal H(k \circ \phi) \to \mathcal H(k)$ by $T_\phi((k \circ \phi)_s)=k_{\phi(s)}$. Furthermore, $T_\phi$ is an unitary between $\mathcal H(k \circ \phi)$ and $\overline{span\set{k_{\phi(s)}}{s \in S}} =\set{f \in \mathcal H}{f|_{\phi(S)}=0}^\perp$. The pushout corresponds to an object, $\mathcal H(k_\phi)$, and morphism $T_\phi \colon \mathcal H(k) \to \mathcal H(k_\phi)$ by $T_\phi(k_x) = k_\phi(-,\phi(x))$. For the pushout, $T_\phi$ is a partial isometry onto $\mathcal H(k_\phi)$. Functoriality of $\otimes$ and $\oplus$ is trivial to check and the only nontrivial fact about the pullback and pushout is boundedness of $T_\phi$. For the pushout, observe that $U \colon \tilde{\mathcal H} \to \mathcal H(k_\phi)$ by $U(\tilde{k}_x) = k_\phi(-,\phi(x))$ is unitary and $T_\phi = UP_{\tilde{\mathcal H}}$.
A similar argument follows for the pullback construction.

The pointwise sum of kernels will not be considered in this paper since it is incompatible with RKHSs with an algebra structure. By contrast with the pointwise product which is built from the pullback by $\mathcal H_1 \otimes \mathcal H_2(\Delta(X))$, the pointwise sum does not appear to have an analogous construction from sums, tensor products, pullbacks and pushouts.

The main construction of RKHSs in this article will come from locally compact abelian groups. Let $G$ be a locally compact abelian group and $\mu$ a choice of Haar measure. $\hat{G}$ and $\hat{\mu}$ will denote the dual group and dual Haar measure. $\mathcal F \colon L^1(G) \to C_0(\hat{G})$ and $\hat{\mathcal F} \colon L^1(\hat{G}) \to C_0(G)$ will denote the Fourier transforms.
$$\mathcal F(f)(\gamma) = \int_G f(x)\gamma(-x)d\mu(x) \quad \quad \hat{\mathcal F}(\hat{f})(x) = \int_{\hat{G}} \hat{f}(\gamma)\gamma(x)d\hat{\mu}(x)$$

Given a locally compact group $G$, $p \in [1,\infty]$, and positive measurable weight function
$$\omega \colon \hat{G} \to \mathbb R_{>0} \quad \omega(\gamma) > \delta >0 \text{ for all } \gamma \in \hat{G}$$
we may build the weighted Wiener-Beurling $L^p$-spaces
$$\hat{\mathcal F}L^p_{\omega}(\hat{G}) = \set{\hat{\mathcal F}\hat{f} \in C_0(G)}{\hat{f} \in L^1(\hat{G}), \, \norm{f}_{\hat{\mathcal F}L^p_{\omega}(\hat{G})}=\norm{\hat{f}\omega}_{L^p(\hat{G})}<\infty}.$$
In particular, in the next section we consider the case when $p=2$, $\omega = 1/\sqrt{\lambda}$, $\lambda \in C_0(\hat{G}) \cap L^1(\hat{G})$. Then $\mathcal H_\lambda := \hat{\mathcal F}L^p_\omega(\hat{G})$ becomes a reproducing kernel Hilbert space with kernel
$$k(x,y) = \int_{\hat{G}} \lambda(\gamma) \gamma(x)\overline{\gamma(y)} d\hat{\mu}(\gamma) \quad p< \infty.$$

There is a body of literature addressing the relationship between weights $\lambda$ and properties of $\hat{\mathcal F}L^p_\omega(\hat{G})$ and other function spaces. See \cite{F79}, \cite{Grochenig07}, and \cite{Kan09} for an overview.

\section{Reproducing kernel Hilbert algebras}

In \cite{DG23}, a class of RKHSs on compact abelian groups are built such that $\norm{fg}_\mathcal H \leq \tilde{C}\norm{f}_\mathcal H \norm{g}_\mathcal H$. While this condition is desirable for an algebra structure, we also want to consider compatibility with RKHS constructions. Hilbert spaces have many special properties that will influence the algebra structure we define. First, the category of Hilbert spaces has a unique tensor product and the tensor product of two RKHAs should be an RKHA. Since morphisms in the category of Hilbert spaces are bounded linear operators, multiplication should be a bounded linear operator from $\mathcal H \otimes \mathcal H$ to $\mathcal H$. Second, since Hilbert spaces are self-dual any algebra structure also defines a coalgebra structure via the adjoint and visa versa. Having these equivalent pictures will be useful for several proofs. We begin with a few analogies with bialgebras which inspired the coalgebra approach to RKHAs.

A bialgebra over $\mathbb F$ is an $\mathbb F$ vector space, $H$, with an algebra and coalgebra structure
$$ \text{unit: } \eta \colon \mathbb F \to H \quad \text{multiplication: } \nabla \colon H \otimes H \to H$$
$$ \text{counit: } \varepsilon \colon H \to \mathbb F \quad \text{comultiplication: } \Delta \colon H \to H \otimes H$$
such that comultiplication and the counit are algebra homomorphisms. In the context of Hopf algebras, elements satisfying $\Delta(\xi) = \xi \otimes \xi$ are called group type elements.

\begin{defn}
    Let $X$ be a set and $\mathcal H \subset \mathcal L(X)$ a reproducing kernel Hilbert space of functions on $X$ with positive definite kernel. Let $\Delta \colon \{k_x \mid x \in X\} \to \mathcal H \otimes \mathcal H$ by $\Delta(k_x)=k_x \otimes k_x$. We call $\mathcal H$ a \textbf{reproducing kernel Hilbert algebra} or \textbf{RKHA} if $\Delta$ extends to a bounded linear operator on $\mathcal H$. Observe that $\Delta$ is unique if it exists since $\text{span}\set{k_x}{x \in X}$ is dense in $\mathcal H$. If $1 \in \mathcal H$, then $\mathcal H$ is called unital.
\end{defn}

If $\mathcal H$ is an RKHA, the triple $(\mathcal H, \Delta, \braket{ \cdot}{1})$ defines a coassociative and cocommutative coalgebra. Since Hilbert spaces are self-dual, these also define an algebra structure on $\mathcal H$,
$$\braket{\Delta^*(f \otimes g)}{k_x} = \braket{f\otimes g}{k_x \otimes k_x} = f(x)g(x) \quad \quad \text{for } f,g \in H,$$
$$\norm{fg}_\mathcal H = \norm{\Delta^*(f \otimes g)}_\mathcal H \leq \norm{\Delta}_{op}\norm{f}_\mathcal H\norm{g}_\mathcal H,$$
where $\norm{\cdot}_{op}$ denotes the operator norm. Equivalently, $\mathcal H$ is an RKHA if pointwise multiplication, $\Delta^* \colon \mathcal H \otimes \mathcal H \to \mathcal H$ is well defined and bounded.

If $\mathcal H$ is unital then the operator norm on $Mult(\mathcal H) \cong \mathcal H$ is equivalent to the Hilbert space norm making it a Banach algebra.
$$\norm{f}_\mathcal H =\norm{M_f 1}_\mathcal H \leq \norm{M_f}_{op} \norm{1}_\mathcal H$$
$$\frac{1}{\norm{1}_\mathcal H} \norm{f}_\mathcal H \leq \norm{M_f}_{op} \leq \norm{\Delta}_{op} \norm{f}_\mathcal H$$
In the nonunital case the norm on $\mathcal H$ can be scaled to make $(\mathcal H, \Delta^*)$ into a Banach algebra $\norm{\cdot}_{Ban} = \norm{\Delta}_{op} \norm{\cdot}_{\mathcal H}$. Therefore $(\mathcal H,\Delta)$ is always a Banach algebra with respect to a norm equivalent to the Hilbert space norm. Let $\norm{\cdot}_{Ban}$ denote the rescaled norm in the nonunital case and the operator norm in the unital case. This is by no means a unique choice, however, the norm and weak-$*$ topologies on $\mathcal H$ remain unchanged. In the unital case the norms $\norm{\cdot}_{Ban}$ and $\norm{\cdot}_{\mathcal H}$ must be different as $\mathbb C$ is the only Hilbert algebra with a submultiplicative norm and unit norm multiplicative identity (see \cite{Cho78}).

Strict positive definiteness of the kernel implies that $\mathcal H$ separates the points of $X$, and $\Delta$ extends to a densely defined linear operator on $span \{ k_x \mid x \in X \}$ even if $\mathcal H$ is not an RKHA. Boundedness of the kernel is another immediate consequence of boundedness of $\Delta$.
\begin{equation}
    \label{eq:sqrt_kx}
    \sqrt{k(x,x)}=\frac{\norm{\Delta(k_x)}_\mathcal H}{\norm{k_x}_\mathcal H} \leq \norm{\Delta}_{op}
\end{equation}
The existence of $\Delta$ as a bounded map also has interesting consequences for the morphisms between RKHAs. Let $\mathcal H_i$ be unital RKHAs and $T \colon \mathcal H_1 \to \mathcal H_2$ a morphism. Since (as one can verify from the definitions) $\Delta T = (T \otimes T) \Delta$, we have $T^* \Delta^* = \Delta^* (T^* \otimes T^*)$ and so $T^*$ is multiplicative. Moreover, since  $T(k^1_x) =k^2_{F(x)}$ for all $x \in X$, we have $\braket{T^*(1_{\mathcal H_2})}{k_x^1} =1$ which implies $T^*(1_{\mathcal H_2})=1_{\mathcal H_1}$. It follows that $T^*$ is a morphism between unital Banach algebras. Furthermore, $T^*$ being a unital Banach algebra morphism implies $T$ is a morphism between RKHSs.

The reproducing kernel Hilbert algebras built in \cite{DG23} have a weaker definition than the one above. We begin by showing that the weighted Wiener-Beurling type algebras with subconvolutive weights built in \cite{DG23} satisfy this stronger definition as well. We address the compact and locally compact cases separately.

\begin{thm}\label{Example 1}
Let $G$ be a compact abelian group and let $\lambda \in L^1(\hat{G})$ be strictly positive. Then
$$\mathcal H=\set{\hat{\mathcal F}\hat{f} \in C(G)}{\sum_{\gamma \in \hat{G}} \frac{\abs{\hat{f}(\gamma)}^2}{\lambda(\gamma)} < \infty} \quad \quad \braket{f}{g}_\mathcal H = \sum_{\gamma \in \hat{G}} \frac{\hat{f}(\gamma) \overline{\hat{g}(\gamma)}}{\lambda(\gamma)}$$
$$k_x(y) = \sum_{\gamma \in \hat{G}} \lambda(\gamma) \overline{\gamma(x)} \gamma(y)$$
is an RKHS with a strictly positive definite kernel function. Furthermore, $\mathcal H$ is an RKHA iff $\lambda$ is subconvolutive (i.e. $\lambda * \lambda (\gamma) \leq C \lambda(\gamma)$).
\end{thm}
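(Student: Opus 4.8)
The plan is to transport the whole problem to the Fourier side, where $\mathcal H$ becomes a weighted $\ell^2$ space on the discrete dual $\hat G$ and pointwise multiplication becomes convolution of Fourier coefficients; subconvolutivity then appears precisely as the Cauchy--Schwarz constant governing that convolution. First I would dispatch the RKHS claims. Writing $f=\hat{\mathcal F}\hat f$, the map $f\mapsto\hat f$ is a unitary from $\mathcal H$ onto $\ell^2(\hat G,\lambda^{-1})$, and since $\widehat{k_x}(\gamma)=\lambda(\gamma)\overline{\gamma(x)}$ one checks $\langle f,k_x\rangle_{\mathcal H}=\sum_\gamma \hat f(\gamma)\gamma(x)=f(x)$, so $k$ is the reproducing kernel; here $k_x\in\mathcal H$ because $\sum_\gamma\lambda(\gamma)=\lVert\lambda\rVert_1<\infty$, and the same bound gives $\hat f\in\ell^1(\hat G)$ for every $f\in\mathcal H$, so $\mathcal H\subset C(G)$ with continuous (indeed uniformly bounded) evaluation. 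For strict positive definiteness, $\sum_i c_i k_{x_i}=0$ forces $\lambda(\gamma)\sum_i c_i\overline{\gamma(x_i)}=0$ for all $\gamma$; since $\lambda>0$, the Fourier--Stieltjes transform of the measure $\sum_i c_i\delta_{x_i}$ vanishes, hence the measure is $0$ and all $c_i=0$ because the $x_i$ are distinct.

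Next I would set up the Fourier model of the tensor product. By construction (i), $\mathcal H\otimes\mathcal H$ is the RKHS on $G\times G$ with weight $(\gamma_1,\gamma_2)\mapsto\lambda(\gamma_1)\lambda(\gamma_2)$ on $\widehat{G\times G}=\hat G\times\hat G$, i.e. after Fourier transform it is $\ell^2(\hat G\times\hat G,(\lambda\otimes\lambda)^{-1})$. Restriction to the diagonal sends $F$ to the function with Fourier coefficients $\gamma\mapsto\sum_{\gamma_1+\gamma_2=\gamma}\hat F(\gamma_1,\gamma_2)$, so under the substitution $a=\hat F\,(\lambda\otimes\lambda)^{-1/2}$, which makes $\lVert F\rVert^2=\lVert a\rVert_{\ell^2}^2$, the multiplication operator $m=\Delta^*$ reads
\[
\lVert mF\rVert_{\mathcal H}^2=\sum_\gamma\frac{1}{\lambda(\gamma)}\Big|\sum_{\gamma_1+\gamma_2=\gamma}a(\gamma_1,\gamma_2)\sqrt{\lambda(\gamma_1)\lambda(\gamma_2)}\Big|^2,
\]
and $\mathcal H$ is an RKHA exactly when this $m$ is bounded.

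For the forward direction, Cauchy--Schwarz on the inner sum over the antidiagonal $\{\gamma_1+\gamma_2=\gamma\}$ produces the factor $\sum_{\gamma_1+\gamma_2=\gamma}\lambda(\gamma_1)\lambda(\gamma_2)=(\lambda*\lambda)(\gamma)$, giving $\lVert mF\rVert_{\mathcal H}^2\le\sum_\gamma\frac{(\lambda*\lambda)(\gamma)}{\lambda(\gamma)}\sum_{\gamma_1+\gamma_2=\gamma}|a(\gamma_1,\gamma_2)|^2$; if $\lambda*\lambda\le C\lambda$ this is at most $C\lVert a\rVert^2=C\lVert F\rVert^2$, so $m$ is bounded with $\lVert m\rVert^2\le C$. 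For the converse I would saturate this Cauchy--Schwarz by fixing $\gamma_0$ and taking the extremizer $\hat F(\gamma_1,\gamma_2)=\lambda(\gamma_1)\lambda(\gamma_2)$ for $\gamma_1+\gamma_2=\gamma_0$ and $0$ otherwise, which lies in $\mathcal H\otimes\mathcal H$ precisely because $\lVert F\rVert^2=(\lambda*\lambda)(\gamma_0)<\infty$ (as $\lambda\in\ell^1$ forces $\lambda*\lambda$ finite pointwise). Then $\lVert mF\rVert_{\mathcal H}^2=(\lambda*\lambda)(\gamma_0)^2/\lambda(\gamma_0)$, and boundedness of $m$ yields $(\lambda*\lambda)(\gamma_0)^2/\lambda(\gamma_0)\le\lVert m\rVert^2(\lambda*\lambda)(\gamma_0)$; dividing by $(\lambda*\lambda)(\gamma_0)>0$ gives $(\lambda*\lambda)(\gamma_0)\le\lVert m\rVert^2\lambda(\gamma_0)$, i.e. subconvolutivity with $C=\lVert m\rVert^2=\lVert\Delta\rVert^2$.

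The RKHS verifications are routine; the one place demanding care is identifying $\mathcal H\otimes\mathcal H$ with the weighted $\ell^2$ space on $\hat G\times\hat G$ and describing $m$ as antidiagonal summation. This is what makes the converse work, since the extremizing $F$ is genuinely not a simple tensor $f\otimes g$ but spreads over the entire antidiagonal $\{\gamma_1+\gamma_2=\gamma_0\}$ — which is exactly why boundedness of the \emph{linear} operator $m$ on the full tensor product, rather than mere boundedness of the bilinear product map, is equivalent to the pointwise estimate $\lambda*\lambda\le C\lambda$ and not to some strictly weaker condition.
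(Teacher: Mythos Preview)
Your proof is correct and follows essentially the same Fourier-side strategy as the paper. The only cosmetic difference is that the paper works with $\Delta$ directly, observing that it sends the orthonormal basis $\psi_\gamma=\sqrt{\lambda(\gamma)}\,\gamma$ to the mutually orthogonal family $\Delta\psi_\gamma$ with $\lVert\Delta\psi_\gamma\rVert^2=(\lambda*\lambda)(\gamma)/\lambda(\gamma)$ (so boundedness is immediately equivalent to $\sup_\gamma(\lambda*\lambda)(\gamma)/\lambda(\gamma)<\infty$), whereas you bound the adjoint $m=\Delta^*$ via Cauchy--Schwarz and then test it on an extremizer---and your extremizing $F$ is, up to a scalar, precisely $\Delta\psi_{\gamma_0}$, so the two arguments are literally dual to one another.
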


\begin{proof}
Let $\lambda$ be subconvolutive. We will show that $\Delta$ is bounded by diagonalizing it over the orthonormal bases of $\mathcal H$ and $\mathcal H \otimes \mathcal H$ coming from characters of $G$. Define the orthonormal basis
$$\set{ \psi_\gamma = \sqrt{\lambda(\gamma)}\gamma}{\gamma \in \hat{G}} \subset \mathcal H$$
and $T \colon \mathcal H \to \mathcal H \otimes \mathcal H$ by
$$T(\psi_\gamma) = \sum_{\alpha+\beta=\gamma} \sqrt{\frac{\lambda(\alpha)\lambda(\beta)}{\lambda(\gamma)}} \psi_\alpha \otimes \psi_\beta.$$
$T$ is bounded iff $\lambda$ is subconvolutive since
$$\braket{T(\psi_\gamma)}{T(\psi_\sigma)} = \delta_{\gamma=\sigma} \sum_{\alpha+\beta=\gamma} \frac{\lambda(\alpha)\lambda(\beta)}{\lambda(\gamma)} =\delta_{\gamma=\sigma} \frac{\lambda * \lambda (\gamma)}{\lambda(\gamma)}.$$
Observe that $k_x=\sum_{\gamma \in \hat{G}} \overline{\psi_\gamma(x)} \psi_\gamma$, and so
\begin{equation}
    \label{eq:T_kx}
    T(k_x)=\sum_{\gamma \in \hat{G}} \sum_{\alpha+\beta=\gamma} \sqrt{\frac{\lambda(\alpha)\lambda(\beta)}{\lambda(\gamma)}} \overline{\psi_{\gamma}(x)} \psi_\alpha \otimes \psi_\beta = \sum_{\alpha,\beta \in \hat{G}} \overline{\psi_\alpha(x) \psi_\beta(x)} \psi_\alpha \otimes \psi_\beta = k_x \otimes k_x.
\end{equation}
Therefore $T$ extends $\Delta$ to a bounded linear operator. For completeness, we note that
\begin{equation}
    \label{eq:T_unital}
    T(\xi) = \sum_{(\alpha,\beta) \in \hat G \times \hat G} \dfrac{\hat{\xi}(\alpha+\beta)}{\sqrt{\lambda(\alpha+\beta)}} \sqrt{\dfrac{\lambda(\alpha)\lambda(\beta)}{\lambda(\alpha+\beta)}} \psi_\alpha \otimes \psi_\beta \quad \quad \text{for } \xi = \hat{\mathcal F}(\hat \xi) \in \mathcal H.
\end{equation}

Next, let $\Delta$ be bounded such that $\Delta(k_x)=k_x \otimes k_x$. Since
$$\psi_\gamma = \int_G k_x \frac{\gamma(x)}{\sqrt{\lambda(\gamma)}} d\mu(x)$$
where $\mu$ is the Haar measure on $G$, then
$$\Delta(\psi_\gamma) = \int_G k_x \otimes k_x \frac{\gamma(x)}{\sqrt{\lambda(\gamma)}} d\mu(x)=\sum_{\alpha+\beta=\gamma} \sqrt{\frac{\lambda(\alpha)\lambda(\beta)}{\lambda(\gamma)}} \psi_\alpha \otimes \psi_\beta.$$
We observed above that this operator is bounded iff $\lambda$ is subconvolutive.
\end{proof}

\begin{thm}\label{Example 2}
Let $G$ be a locally compact abelian group and $\lambda \in L^1(\hat{G})\cap C_0(G)$ strictly positive. Then
$$\mathcal H = \set{\hat{\mathcal F} \hat f \in C_0(G)}{\int_{\hat{G}} \frac{\abs{\hat{f}(\gamma)}^2}{\lambda(\gamma)} d\hat{\mu}(\gamma) < \infty} \quad \braket{f}{g}_\mathcal H = \int_{\hat{G}} \frac{\hat{f}(\gamma)\overline{\hat{g}(\gamma)}}{\lambda(\gamma)} d\hat{\mu}(\gamma)$$
$$k_x(y)=\int_{\hat{G}}\lambda(\gamma)\overline{\gamma(x)}\gamma(y)d\hat{\mu}(\gamma)$$
is an RKHA iff $\lambda$ is subconvolutive.
\end{thm}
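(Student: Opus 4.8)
The plan is to follow the strategy of Theorem~\ref{Example 1}, replacing the discrete orthonormal basis $\{\psi_\gamma\}$ by the continuous Fourier picture. First I would record that the Plancherel-type map $\Phi\colon \mathcal H \to L^2(\hat G)$, $\Phi(f) = \hat f/\sqrt{\lambda}$, is a unitary: it is isometric by the definition of $\braket{\cdot}{\cdot}_\mathcal H$, and surjective because for $w \in L^2(\hat G)$ the product $w\sqrt\lambda$ lies in $L^1(\hat G)$ (Cauchy--Schwarz, using $\sqrt\lambda \in L^2$ since $\lambda \in L^1$), so $f = \hat{\mathcal F}(w\sqrt\lambda)$ is a preimage. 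The same construction gives a unitary $\Phi \otimes \Phi \colon \mathcal H \otimes \mathcal H \to L^2(\hat G \times \hat G)$. Under $\Phi$ the reproducing kernels become $u_x := \Phi(k_x)$ with $u_x(\gamma) = \sqrt{\lambda(\gamma)}\,\overline{\gamma(x)}$, and since $\{k_x\}$ has dense span in $\mathcal H$ the family $\{u_x\}$ has dense span in $L^2(\hat G)$.

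Next I would introduce the continuous analogue of the operator $T$ from Theorem~\ref{Example 1}: define $\tilde T \colon L^2(\hat G) \to L^2(\hat G \times \hat G)$ by $\tilde T u(\alpha,\beta) = u(\alpha+\beta)\sqrt{\lambda(\alpha)\lambda(\beta)/\lambda(\alpha+\beta)}$. The key computation is the change of variables $(\alpha,\beta) \mapsto (\alpha, \gamma)$ with $\gamma = \alpha+\beta$, which is measure preserving by translation invariance of $\hat\mu$; together with Tonelli this yields
\begin{equation*}
\norm{\tilde T u}_{L^2(\hat G^2)}^2 = \int_{\hat G} \abs{u(\gamma)}^2 \frac{(\lambda*\lambda)(\gamma)}{\lambda(\gamma)} \, d\hat\mu(\gamma).
\end{equation*}
Using that characters are multiplicative, $\overline{(\alpha+\beta)(x)} = \overline{\alpha(x)}\,\overline{\beta(x)}$, one checks $\tilde T u_x = u_x \otimes u_x$, so the operator $(\Phi\otimes\Phi)^{-1}\tilde T\Phi$ sends $k_x$ to $k_x \otimes k_x$ and hence extends $\Delta$ whenever $\tilde T$ is bounded.

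The forward direction is then immediate: if $\lambda * \lambda \le C\lambda$ a.e., the displayed identity gives $\norm{\tilde T}_{op}^2 \le C$, so $\Delta$ extends to a bounded operator and $\mathcal H$ is an RKHA. For the converse I would argue that boundedness of $\Delta$ forces subconvolutivity. If $\Delta$ extends to a bounded operator of norm $M$, then $S := (\Phi\otimes\Phi)\,\Delta\,\Phi^{-1}$ is bounded and, being linear and agreeing with $\tilde T$ on each $u_x$, agrees with $\tilde T$ on the dense span $V = \mathrm{span}\{u_x\}$; hence $\int_{\hat G} \abs{u}^2 \frac{\lambda*\lambda}{\lambda}\, d\hat\mu \le M^2 \norm{u}_{L^2}^2$ for all $u \in V$. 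To turn this integrated inequality into the pointwise bound defining subconvolutivity, I would truncate: for $h_n = \min(\frac{\lambda*\lambda}{\lambda}, n)$ the multiplication operator $M_{\sqrt{h_n}}$ is bounded, satisfies $\norm{M_{\sqrt{h_n}}u} \le M\norm{u}$ on the dense set $V$, hence has operator norm at most $M$ on all of $L^2(\hat G)$; since the norm of a multiplication operator equals the essential supremum of its symbol, $\norm{h_n}_\infty \le M^2$, and letting $n \to \infty$ gives $(\lambda*\lambda)(\gamma) \le M^2 \lambda(\gamma)$ a.e.

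I expect the converse to be the main obstacle. In the compact case subconvolutivity is read off directly from the diagonal of the operator $T$, whereas here boundedness of $\Delta$ only supplies an inequality integrated against the test family $\{u_x\}$, while subconvolutivity is an a.e.\ pointwise statement; the truncation/multiplication-operator argument above is what bridges this gap. The remaining points are measure-theoretic bookkeeping that I would check carefully: surjectivity of $\Phi$, the a.e.\ finiteness of $\lambda*\lambda$ (which holds since $\lambda \in L^1(\hat G)$), measurability of $\tilde T u$ on $\hat G \times \hat G$, and the applicability of Tonelli in the change-of-variables computation.
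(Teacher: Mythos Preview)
Your proposal is correct and follows the same core strategy as the paper: transfer to the Fourier side via the unitary $\Phi$, and reduce both directions to the norm identity $\norm{\tilde T u}^2 = \int_{\hat G} \abs{u}^2 \,\frac{\lambda*\lambda}{\lambda}\,d\hat\mu$. The only organizational difference is that the paper first proves a separate lemma showing that $\Delta$ is closable, working on the dense domain $\mathcal D(T)$ of functions with compactly supported Fourier transform and then approximating each $k_x$ by truncated kernels $k_{x,E}$ to place $\operatorname{span}\{k_x\}$ inside the closure of $T$; both directions of the theorem are then argued via this closure. You bypass that lemma by defining $\tilde T$ directly (on all of $L^2(\hat G)$ in the forward direction, and on $V = \operatorname{span}\{u_x\}$ for the converse), which is a cleaner route to the theorem as stated, at the cost of not obtaining closability of $\Delta$ in the non-subconvolutive case as a byproduct. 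Your truncation argument with $h_n = \min(\frac{\lambda*\lambda}{\lambda},n)$ is in fact more explicit than the paper's converse, which simply asserts that boundedness of the quadratic form $\langle M_{(\lambda*\lambda)/\lambda}\,\cdot\,,\cdot\rangle$ on the dense subspace $\{\hat\xi/\sqrt\lambda : \xi \in \mathcal D(T)\}$ forces $\frac{\lambda*\lambda}{\lambda} \in L^\infty$.
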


Observe that $\hat{f} \in L^1(\hat{G})$ is automatic from $\int_{\hat{G}} \frac{\abs{\hat{f}(\gamma)}^2}{\lambda(\gamma)} d\hat{\mu}(\gamma) < \infty$ since $\norm{\hat{f}}_{L^1(\hat{G})} =\norm{\frac{\hat{f}}{\sqrt{\lambda}} \sqrt{\lambda}}_{L^1(\hat{G})} \leq \norm{\sqrt{\lambda}}_{L^2(\hat{G})} \int_{\hat{G}} \frac{\abs{\hat{f}(\gamma)}^2}{\lambda(\gamma)} d\hat{\mu}(\gamma) <\infty$. Strict positivity of $\lambda \in L^1(\hat{G})$ also forces $\sigma$-finiteness of $(\hat{G},\hat{\mu})$.

\begin{lem}
    With the notation of theorem~\ref{Example 2}, $\Delta \colon \text{span}\set{k_x}{x \in \hat{G}} \to \mathcal H \otimes \mathcal H$ is well-defined and closable for strictly positive $\lambda \in L^1(\hat{G}) \cap C_0(\hat G)$.
\end{lem}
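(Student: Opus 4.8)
The statement has two parts: that $\Delta$ is well defined on $\mathrm{span}\{k_x\}$, and that it is closable. For well-definedness, the plan is to first check that the kernel $k$ is strictly positive definite, so that $\{k_x\}$ is linearly independent and the prescription $\Delta(\sum_i c_i k_{x_i}) = \sum_i c_i\, k_{x_i}\otimes k_{x_i}$ is unambiguous. For distinct points $x_1,\dots,x_n$ and scalars $c_i$ one computes $\sum_{i,j} c_i\overline{c_j}\,k(x_i,x_j) = \int_{\hat G} \lambda(\gamma)\,\abs{g(\gamma)}^2\, d\hat\mu(\gamma)$, where $g(\gamma) = \sum_j \overline{c_j}\,\gamma(x_j)$. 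Distinct points give distinct, hence linearly independent, characters of $\hat G$, so $g$ fails to be $\hat\mu$-a.e. zero unless all $c_j$ vanish; since $\lambda>0$ everywhere, the integral is then strictly positive. This is exactly the strict positive definiteness invoked in the discussion preceding the lemma, and it produces the required unique linear extension $\Delta$ on the span.

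The closability argument is cleanest in the Fourier picture. The map $\mathcal U\colon \mathcal H \to L^2(\hat G,\hat\mu)$, $\xi \mapsto \hat\xi/\sqrt\lambda$, is unitary by definition of the $\mathcal H$-norm, and likewise $\mathcal H\otimes\mathcal H \cong L^2(\hat G\times\hat G,\hat\mu\otimes\hat\mu)$ by dividing the double Fourier transform by $\sqrt{\lambda\otimes\lambda}$, compatibly with the product kernel. Using $\widehat{k_x}(\gamma)=\lambda(\gamma)\overline{\gamma(x)}$, so that $\mathcal U k_x=\sqrt{\lambda}\,\overline{\gamma(x)}$ and $(\mathcal U\otimes\mathcal U)(k_x\otimes k_x)=\sqrt{\lambda(\alpha)\lambda(\beta)}\,\overline{(\alpha+\beta)(x)}$, I would verify on the span that the conjugated operator $D=(\mathcal U\otimes\mathcal U)\,\Delta\,\mathcal U^{-1}$ acts by
\[
(Du)(\alpha,\beta)=\sqrt{\tfrac{\lambda(\alpha)\lambda(\beta)}{\lambda(\alpha+\beta)}}\;u(\alpha+\beta).
\]
Changing variables $\gamma=\alpha+\beta$ recovers $\norm{\Delta\xi}^2=\int_{\hat G}\tfrac{(\lambda*\lambda)(\gamma)}{\lambda(\gamma)}\abs{u(\gamma)}^2\,d\hat\mu(\gamma)$, i.e. $D=WM_\psi$ for the multiplication operator $M_\psi$ by $\psi=\sqrt{(\lambda*\lambda)/\lambda}$ and an isometry $W$; this is the conceptual source of closability and reproduces the boundedness criterion of theorem~\ref{Example 2}.

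To finish, suppose $\xi_n$ lies in the span with $\xi_n\to 0$ in $\mathcal H$ and $\Delta\xi_n\to\eta$ in $\mathcal H\otimes\mathcal H$; closability requires $\eta=0$. Writing $u_n=\mathcal U\xi_n$ and $v=(\mathcal U\otimes\mathcal U)\eta$, we have $u_n\to 0$ in $L^2(\hat G)$ and $Du_n\to v$ in $L^2(\hat G\times\hat G)$. Passing to a subsequence I may assume $u_n\to 0$ and $Du_n\to v$ pointwise $\hat\mu$-a.e. Because Haar measure is translation invariant, the substitution $(\alpha,\beta)\mapsto\alpha+\beta$ pulls $\hat\mu$-null sets back to $(\hat\mu\otimes\hat\mu)$-null sets by Fubini, so $u_n(\alpha+\beta)\to 0$ for a.e. $(\alpha,\beta)$; since $\lambda$ is finite and positive a.e., the multiplier $\sqrt{\lambda(\alpha)\lambda(\beta)/\lambda(\alpha+\beta)}$ is finite a.e. and hence $Du_n\to 0$ a.e. Uniqueness of almost-everywhere limits forces $v=0$, so $\eta=0$ and $\Delta$ is closable.

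The only genuinely delicate points are measure-theoretic: that $\lambda*\lambda$, and therefore $\psi$, is finite a.e. (immediate from $\lambda\in L^1(\hat G)$), and that a.e. convergence transfers along the addition map $\hat G\times\hat G\to\hat G$. I expect this latter null-set transfer, which lets the pointwise limit of $u_n$ control $Du_n$ off a null set, to be the main obstacle to state carefully; once it is in hand, closability reduces to uniqueness of a.e. limits. An essentially equivalent and slightly more structural route is to note that $\Delta$ is the restriction to $\mathrm{span}\{k_x\}$ of $WM_\psi$, where $M_\psi$ is the densely defined (hence closed) multiplication operator precomposed with an isometry with closed range, so that $WM_\psi$ is closed and any restriction of a closed operator is closable.
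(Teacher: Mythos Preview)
Your argument is correct and takes a genuinely different (and in several respects cleaner) route than the paper.

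For well-definedness, you prove strict positive definiteness of $k$ directly; the paper invokes this fact but does not re-derive it in the locally compact setting, so your explicit treatment is a small plus.

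For closability, the paper introduces an auxiliary operator $T$ on the different dense domain $\mathcal D(T)=\{\hat{\mathcal F}\hat\xi:\hat\xi\in L^1\cap L^\infty,\ \operatorname{supp}\hat\xi\text{ compact}\}$, proves $T$ closable by exhibiting $\hat{\mathcal F}[C_c(\hat G\times\hat G)]\subset\mathcal D(T^*)$, and then shows $(k_x,k_x\otimes k_x)$ lies in the closure of the graph of $T$ via the truncations $k_{x,E}$; closability of $\Delta$ then follows from $\Delta\subset\overline T$. Your approach bypasses the auxiliary domain entirely: after Fourier conjugation you obtain $D$ explicitly on $\operatorname{span}\{\mathcal U k_x\}$ and argue either by (a) a.e.\ subsequential limits together with the Fubini/translation-invariance observation that null sets pull back along $(\alpha,\beta)\mapsto\alpha+\beta$, or (b) the factorization $D=WM_\psi$ with $\psi=\sqrt{(\lambda*\lambda)/\lambda}$. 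The verification that $W$ is an isometry (change variables $\gamma=\alpha+\beta$ and use $\int\lambda(\alpha)\lambda(\gamma-\alpha)\,d\hat\mu(\alpha)=(\lambda*\lambda)(\gamma)$) and that $\mathcal U k_x\in\mathcal D(M_\psi)$ (since $\psi\,\mathcal U k_x=\sqrt{\lambda*\lambda}\,\overline{\gamma(x)}\in L^2$) are straightforward, and then $\Delta$ is a restriction of the closed operator $WM_\psi$, hence closable. This is more direct and immediately yields the identity $\norm{\Delta}_{op}^2=\norm{M_\psi}_{op}^2=\sup_\gamma(\lambda*\lambda)(\gamma)/\lambda(\gamma)$ that the paper only obtains after its longer setup. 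What the paper's route buys is an explicit description of $T$ on a domain well suited to the Fubini/change-of-variable manipulations in the subsequent proof of the theorem; your Fourier-side formula for $D$ provides the same information with less overhead. One wording quibble: in $WM_\psi$ the isometry $W$ is \emph{post}composed with $M_\psi$, not precomposed; the formula and the closedness argument you give are for the correct order.
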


\begin{proof}
    We start with a densely defined operator $T \colon \mathcal H \to \mathcal H \otimes \mathcal H$ that will later coincide with comultiplication.
Define $\psi_\gamma = \sqrt{\lambda(\gamma)} \gamma \in C_b(G,\mathbb C)$ and
$$\zeta_\gamma(x,y) = \int_{\hat{G}} \sqrt{\frac{\lambda(\alpha)\lambda(\gamma-\alpha)}{\lambda(\gamma)}} \psi_\alpha(x) \psi_{\gamma-\alpha}(y)d\hat{\mu}(\alpha) \in C_b(G\times G, \mathbb C).$$
Since $\lambda \in L^1(\hat G) \cap C_0(\hat G) $, for every $\gamma \in \hat G$ and $x,y \in G$, the functions $\alpha \mapsto \sqrt{\lambda(\alpha)\lambda(\gamma-\alpha)}$ and $\alpha \mapsto \psi_\alpha(x)\psi_{\gamma-\alpha}(y)$ lie in $L^2(\hat G)$. Thus by Cauchy-Schwartz
$$\abs{\zeta_\gamma(x,y)} = \abs{\braket{\sqrt{\frac{\lambda(\alpha)\lambda(\gamma-\alpha)}{\lambda(\gamma)}}}{\psi_\alpha(x) \psi_{\gamma-\alpha}(y)}_{L^2(\hat{G})}} \leq \sqrt{\lambda(\gamma)} \frac{(\lambda * \lambda)(\gamma)}{\lambda(\gamma)}.$$ The following dense subspace of $\mathcal H$ will be the domain of $T$
$$\mathcal D(T)=\set{\hat{\mathcal F}\hat \xi}{\hat{\xi} \in L^1(\hat G) \cap L^\infty(\hat{G}), \text{supp}(\hat{\xi}) \subseteq E \subset \hat{G} \text{ with } E \text{ compact}}\subset \mathcal H.$$
Since $\lambda|_E$ is bounded below by a strictly positive number, $T \colon \mathcal D(T) \to \mathcal H \otimes \mathcal H$ given by
$$T(\xi)(x,y) = \int_{\hat{G}} \frac{\hat{\xi}(\gamma)}{\sqrt{\lambda(\gamma)}}\zeta_\gamma(x,y)d\hat{\mu}(\gamma)$$
is well-defined. The integrand is absolutely integrable for every $(x,y) \in G \times G$ since $$\gamma \mapsto \abs{\frac{\hat{\xi}(\gamma)}{\sqrt{\lambda(\gamma)}}\zeta_\gamma(x,y)} \leq \abs{\hat{\xi}(\gamma)} \frac{(\lambda * \lambda)(\gamma)}{\lambda(\gamma)}$$ and $\hat{\xi} \in L^1(\hat{G})$ has compact support. By Fubini-Tonelli and a change variables
\begin{equation}
    \label{eq:T_nonunital}
T(\xi)(x,y) = \iint_{\hat{G}\times \hat{G}} \dfrac{\hat{\xi}(\alpha+\beta)}{\sqrt{\lambda(\alpha+\beta)}} \sqrt{\dfrac{\lambda(\alpha)\lambda(\beta)}{\lambda(\alpha+\beta)}} \psi_\alpha(x) \psi_\beta(y) d\hat{\mu}\times\hat{\mu}(\alpha,\beta).
\end{equation}
Note that $T(\xi)$ above reduces to the formula~\eqref{eq:T_unital} for the unital case when $\hat G$ is a discrete group and $\hat \mu$ the counting measure.
To show that $T(\mathcal D(T)) \subset \mathcal H \otimes \mathcal H$ it suffices that $(\alpha, \beta) \mapsto \dfrac{\hat{\xi}(\alpha+\beta)}{\sqrt{\lambda(\alpha+\beta)}} \sqrt{\dfrac{\lambda(\alpha)\lambda(\beta)}{\lambda(\alpha+\beta)}} \in L^2(\hat{G} \times \hat{G})$ for $\xi \in \mathcal D(T)$. This is easily verified as compact support guarantees
$$\iint_{\hat{G}\times \hat{G}} \frac{\lambda(\alpha)\lambda(\gamma-\alpha)}{\lambda(\gamma)} \frac{\abs{\hat{\xi}(\gamma)}^2}{\lambda(\gamma)} d\hat{\mu}\times \hat{\mu}(\alpha,\gamma) < \infty.$$
Finally, we show that $T \colon \mathcal D(T) \to \mathcal H \otimes \mathcal H$ is closable with $\hat{\mathcal F}[C_c(\hat{G} \times \hat{G})] \subset \mathcal D(T^*)$. Let $E \subset \hat{G}$ be the support of $\hat{\eta} \in C_c(\hat{G} \times \hat{G})$, $\eta = \hat{\mathcal F}(\hat{\eta})$. Then by definition
$$\abs{\braket{T(\xi)}{\eta}_{\mathcal H \otimes \mathcal H}} = \abs{\iint_{\hat{G}\times \hat{G}} \dfrac{\hat{\xi}(\alpha+\beta)}{\sqrt{\lambda(\alpha+\beta)}} \sqrt{\dfrac{\lambda(\alpha)\lambda(\beta)}{\lambda(\alpha+\beta)}} \dfrac{\hat{\eta}(\alpha,\beta)}{\sqrt{\lambda(\alpha)\lambda(\beta)}}d\hat{\mu}\times \hat{\mu}(\alpha,\beta)}$$
$$\leq \iint_{\hat{G}\times \hat{G}} \abs{\dfrac{\hat{\xi}(\gamma)}{\sqrt{\lambda(\gamma)}}} \abs{\dfrac{\hat{\eta}(\alpha,\gamma-\alpha)}{\sqrt{\lambda(\gamma)}}}d\hat{\mu}\times \hat{\mu}(\gamma,\alpha) \leq \norm{\xi}_\mathcal H \int_{\hat{G}} \left(\int_{\hat{G}} \abs{\dfrac{\hat{\eta}(\alpha,\gamma-\alpha)}{\sqrt{\lambda(\gamma)}}}^2d\hat{\mu}(\gamma)\right)^{1/2} d\hat{\mu}(\alpha)$$
$$\leq \norm{\xi}_\mathcal H \norm{\hat{\eta}}_\infty \sup_{(\alpha,\beta) \in E} \frac{1}{\sqrt{\lambda(\alpha+\beta)}} \int_{\hat{G}} \left(\int_{\hat{G}} 1_{E}(\alpha,\beta)d\hat{\mu}(\beta)\right)^{1/2}d\hat{\mu}(\alpha) \leq const(\hat{\eta}) \norm{\xi}_\mathcal H$$
since $E$ is compact and $\lambda$ is strictly positive and continuous. Density of $\hat{\mathcal F}[C_c(\hat{G} \times \hat{G})] \subset \mathcal H \otimes \mathcal H$ implies that $T$ is closable.

We now use $T$ to show that $\Delta$ is closable. For $E \subset \hat{G}$ compact and $x \in G$ define $k_{x,E} = \int_E \overline{\psi_\gamma(x)} \psi_\gamma d\hat{\mu}(\gamma) \in \mathcal D(T)$.
Then similar to the computation of $T(k_x)$ in \eqref{eq:T_kx},
$$T(k_{x,E}) = \int_{\hat{G}} \int_{E} \overline{\psi_\alpha(x)} \psi_\alpha \otimes \overline{\psi_{\gamma-\alpha}(x)} \psi_{\gamma-\alpha} d\hat{\mu}(\gamma)d\hat{\mu}(\alpha)$$ and
$$\norm{k_{x,E}-k_x}^2_\mathcal H = \int_{\hat{G}\backslash E} \lambda(\gamma) d\hat{\mu}(\gamma)$$
$$\norm{T(k_{x,E}) - k_x \otimes k_x}^2_{\mathcal H \otimes \mathcal H} = \int_{\hat{G}\backslash E} \int_{\hat{G}} \lambda(\alpha)\lambda(\gamma-\alpha) d\hat{\mu}(\alpha) d\hat{\mu}(\gamma)=\int_{\hat{G}\backslash E} (\lambda *\lambda)(\gamma) d\hat{\mu}(\gamma).$$
Let $\Lambda$ be the directed set of compact subsets of $\hat{G}$ under inclusion and consider the net,\\ $\left(k_{x,E},T(k_{x,E})\right)_{E \in \Lambda}$ in the graph of $T$. We must show that this net converges to $(k_x,k_x \otimes k_x)$.
Consider the open subsets $F_n = \lambda^{-1}\left(\frac{1}{n},\infty\right) \subset \hat{G}$. Since $\lambda \in L^1(\hat{G})$, $\hat{\mu}(F_n) < \infty$ and for all $\varepsilon >0$ there exists $n_0$ such that $\int_{\hat{G}\backslash F_{n_0}} \lambda d\hat{\mu} < \varepsilon/2$. By inner regularity of the Haar measure $\hat{\mu}(F_{n_0}) = \sup\set{\hat{\mu}(K)}{K \subset F_{n_0}}$, and so there exist $E \subset F_{n_0}$ compact such that $\int_{F_{n_0} \backslash E}\lambda d\hat{\mu} <\varepsilon/2$ which implies $\int_{\hat{G}\backslash E} \lambda d\hat{\mu} < \varepsilon$. Since $\lambda * \lambda \in L^1(\hat{G})$, is strictly positive, and continuous, the argument above implies convergence of the net $\left(k_{x,E},T(k_{x,E})\right)_{E \in \Lambda}$ to $\left( k_x, k_x \otimes k_x \right)$. Finally, let $\tilde T: D(\tilde T) \to \mathcal H \otimes \mathcal H$ be a closed extension of $T$. Since $\tilde T$ is closed and $\lim ( k_{x,E}, \tilde T(k_{x,E})) = \lim ( k_{x,E}, T(k_{x,E})) = (k_x, \Delta k_x)$ we conclude that $\Delta \subseteq \tilde T$ so $\Delta$ is closable.
\end{proof}

We now prove theorem \ref{Example 2}.
\begin{proof}[Proof of theorem]
    Suppose that $\lambda$ is subconvolutive with $\frac{(\lambda * \lambda) (\gamma)}{\lambda(\gamma)} \leq C$. Then $\tilde T\supseteq \Delta$ above is a bounded operator on $\mathcal H$ with $\norm{\tilde T}_{op} \leq \sqrt{C}$ as
$$\norm{T(\xi)}^2_{\mathcal H \otimes \mathcal H} =\iint_{\hat{G}\times \hat{G}} \frac{\lambda(\alpha)\lambda(\gamma-\alpha)}{\lambda(\gamma)} \frac{\abs{\hat{\xi}(\gamma)}^2}{\lambda(\gamma)} d\hat{\mu}\times \hat{\mu}(\alpha,\gamma)=\int_{\hat{G}} \frac{(\lambda*\lambda)(\gamma)}{\lambda(\gamma)} \frac{\abs{\hat{\xi}(\gamma)}^2}{\lambda(\gamma)} d\hat{\mu}(\gamma) \leq C \norm{\xi}^2_\mathcal H.$$

For the converse, suppose that $\Delta \colon \mathcal H \to \mathcal H \otimes \mathcal H$ is a bounded operator such that $\Delta(k_x) = k_x \otimes k_x$. Then immediately $\overline{T} = \Delta$ is bounded. Since $T$ is bounded and
$$\norm{T(\xi)}^2_{\mathcal H \otimes \mathcal H} =\int_{\hat{G}} \frac{(\lambda*\lambda)(\gamma)}{\lambda(\gamma)} \frac{\abs{\hat{\xi}(\gamma)}^2}{\lambda(\gamma)} d\hat{\mu}(\gamma)=\braket{M_{\frac{\lambda*\lambda}{\lambda}} \frac{\hat{\xi}}{\sqrt{\lambda}}}{\frac{\hat{\xi}}{\sqrt{\lambda}}}_{L^2(\hat{G})}$$
it is necessary for $\frac{\lambda*\lambda}{\lambda}$ to be bounded. Hence $\norm{\Delta}_{op} = \sqrt{\norm{M_{\frac{\lambda*\lambda}{\lambda}}}_{op}} = \sqrt{C}$ where $C$ is the smallest constant such that $\frac{\lambda*\lambda}{\lambda}(\gamma) \leq C$ for all $\gamma \in \hat{G}$.
\end{proof}

\begin{exmp}\label{subexponential weights}
Examples of subconvolutive weights can be found in \cite{F79} and \cite{CNW73}. The weights we consider are
\begin{enumerate}[(i)]
\item $\hat{G} = \mathbb Z^n$ and $\lambda(k) = e^{-\tau \norm{k}_p^p}$ for $0<\tau$ and $0<p<1$.
\item $\hat{G} = \mathbb R^n$ and $\lambda(x) = e^{-\tau \norm{x}_p^p}$ for $0<\tau$ and $0<p<1$.
\end{enumerate}
where $\norm{x}_{p}^p=\sum_{i=1}^n x_i^p$. These specific weights also satisfy the GRS and BD conditions which will be important later. The weight $(i)$ is also submultiplicative.
\end{exmp}

We now give a weaker version of bounded approximate unit that is compatible with the categorical structure we will investigate later.

\begin{defn}
A nonunital RKHA $\mathcal H$ has a weak approximate unit if there is a net $(\eta_i)_{i \in I} \subset \mathcal H$ such that $M_{\eta_i} \to id_{\mathcal H}$ in the weak operator topology and $(M_{\eta_i})_{i \in I}$ is operator norm bounded.
\end{defn}

Consider the example $\mathcal H_\lambda \subset C_0(\mathbb R)$ with $\lambda(x) = e^{-\tau \abs{x}^p}$, $\tau >0$, $0<p<1$. We now show that $\eta_n = sinc(\frac{x}{2n})$ is a countable weak approximate unit by reducing the problem to the existence of an approximate unit for $L^1(\mathbb R)$ under convolution. We will work entirely in the inverse Fourier domain with $\hat{\eta_n} = n 1_{[-1/2n,1/2n]}$.

\begin{proof} Observe that $\frac{\lambda(\gamma+\delta)}{\lambda(\gamma)} \leq e^{\tau \abs{\delta}^p}$. Then by Cauchy-Schwarz and Holders inequalities
$$n^2\int_{-1/2n}^{1/2n}\int_{-1/2n}^{1/2n}\int_{\mathbb R} \abs{\frac{\hat{\xi}(\gamma+\delta)}{\sqrt{\lambda(\gamma+\delta)}}\frac{\overline{\hat{\xi}(\gamma+\varepsilon)}}{\sqrt{\lambda(\gamma+\varepsilon)}}} \frac{\sqrt{\lambda(\gamma+\delta)}\sqrt{\lambda(\gamma+\varepsilon)}}{\lambda(\gamma)} d\gamma d\delta d\varepsilon \leq e^{\tau (1/2n)^p} \norm{\xi}_{\mathcal H}^2.$$
By Fubini-Tonelli and a change of variables the integral above majorizes
$$\norm{\eta_n \xi}_{\mathcal H}^2 = \int_{\mathbb R} \abs{n\int_{\gamma-1/2n}^{\gamma+1/2n} \hat{\xi}(\delta)d\delta}^2 \frac{1}{\lambda(\gamma)} d\gamma \leq e^{\tau (1/2n)^p} \norm{\xi}_{\mathcal H}^2.$$

In what follows, we must take local averages of functions $\frac{\hat{\xi}}{\sqrt{\lambda}} \in L^1_{loc}(\mathbb R) \cap L^2(\mathbb R)$ and verify that $\left(\frac{\hat{\xi}}{\sqrt{\lambda}}\right)_n = \left[ \gamma \mapsto n \int_{\gamma-1/2n}^{\gamma+1/2n}\frac{\hat{\xi}(\delta)}{\sqrt{\lambda(\delta)}} d\delta\right] \in L^1_{loc}(\mathbb R) \cap L^2(\mathbb R)$. The argument above implies these local averages are in $L^2(\mathbb R)$ and $\norm{\left(\frac{\hat{\xi}}{\sqrt{\lambda}}\right)_n}_{L^2(\mathbb R)} \leq \norm{\frac{\hat{\xi}}{\sqrt{\lambda}}}_{L^2(\mathbb R)}$. It follows easily that $\left(\frac{\hat{\xi}}{\sqrt{\lambda}}\right)_n \in L^1_{loc}(\mathbb R)$. Then
$$\abs{\braket{\eta_n \xi}{\zeta}_{\mathcal H}-\braket{\left(\frac{\hat{\xi}}{\sqrt{\lambda}}\right)_n}{\frac{\hat{\zeta}}{\sqrt{\lambda}}}_{L^2(\mathbb R)}} =\abs{\int_{\mathbb R} \frac{\overline{\hat{\zeta}(\gamma)}}{\sqrt{\lambda(\gamma)}} n \int_{\gamma-1/2n}^{\gamma +1/2n} \frac{\hat{\xi}(\delta)}{\sqrt{\lambda(\delta)}} \left( \sqrt{\frac{\lambda(\delta)}{\lambda(\gamma)}} -1 \right) d\delta d \gamma}$$
$$\leq max\left\lbrace\abs{e^{(\tau/2)(1/2n)^p} -1},\abs{e^{-(\tau/2)(1/2n)^p} -1}\right\rbrace \norm{\xi}_{\mathcal H} \norm{\zeta}_{\mathcal H}$$
converges to zero as $n$ goes to infinity. This reduces the problem of showing $\abs{\braket{\eta_n \xi}{\zeta}_{\mathcal H} - \braket{\xi}{\zeta}_{\mathcal H}} \to 0$ to proving $\left(\frac{\hat{\xi}}{\sqrt{\lambda}}\right)_n \to \frac{\hat{\xi}}{\sqrt{\lambda}}$ weakly in $L^2(\mathbb R)$. Since this sequence is norm bounded it suffices to check the condition for weak convergence against a dense subspace of $L^2(\mathbb R)$. Take $\frac{\hat{\zeta}}{\sqrt{\lambda}}$ to be the indicator function on a bounded interval $E$ and observe that $\left(\frac{\hat{\xi}}{\sqrt{\lambda}}\right)_n, \frac{\hat{\xi}}{\sqrt{\lambda}} \in L^1_{loc}(\mathbb R)$
$$\abs{\braket{\left(\frac{\hat{\xi}}{\sqrt{\lambda}}\right)_n}{1_E}_{L^2(\mathbb R)} - \braket{\frac{\hat{\xi}}{\sqrt{\lambda}}}{1_E}_{L^2(\mathbb R)}} \leq \norm{\left(\frac{\hat{\xi}}{\sqrt{\lambda}}\right)_n - \frac{\hat{\xi}}{\sqrt{\lambda}}}_{L^1(E)}$$
which converges to zero.
\end{proof}

This approximate unit also gives an example of an RKHA where the operator norm and Hilbert space norms are not equivalent. Suppose that $\norm{\cdot}_{\mathcal H_\lambda}$ and $\norm{\cdot}_{op}$ are equivalent in the example above. Then $\eta_n$ is $\norm{\cdot}_{\mathcal H_\lambda}$ bounded and hence has a weakly convergent subsequence $\eta_{n_k} \to \eta$ with the property
$$k(x,x) =\lim_{k \to \infty} \braket{\eta_{n_k}k_x}{k_x} =\lim_{k \to \infty} \braket{\eta_{n_k}\otimes k_x }{k_x\otimes k_x}  = \braket{\eta}{k_x}k(x,x).$$
Therefore, $\eta(x) =1$ for all $x \in G$ and $\mathcal H_\lambda$ is unital which is a contradiction.

We end this section with some remarks on the spectra of RKHAs that will be useful in the categorical constructions of section~\ref{sec:rkha_cat}. We employ the standard definition from abelian Banach algebras to define the spectrum, $\sigma(\mathcal H)$, of an RKHA $\mathcal H$ on a set $X$ as the set of nonzero, multiplicative linear functionals on $\mathcal H$ called characters, equipped with the weak-$*$ topology from $\mathcal H$. Continuity of characters is automatic since they can be extended to characters on the unitalization. It is immediate that $\sigma(\mathcal H)$ contains all nonzero evaluation functionals $ev_x$, $x \in X$. This implies that the spectrum separates points in $\mathcal H$ and we can canonically identify $\mathcal H$ with a subspace of $C(\sigma(\mathcal H))$ under the Gelfand transform, $f \in \mathcal H \mapsto \tilde f \in C(\sigma(\mathcal H))$ with $\tilde f(\phi) = \phi(f)$. In what follows, we will make that identification when convenient.

\section{The RKHA category}
\label{sec:rkha_cat}

Let $\textbf{RKHA}$ denote the full subcategory of $\textbf{RKHS}$ whose objects are RKHAs. There are four constructions we consider in relation to this category, $\oplus$, $\otimes$, the pullback, and the pushout. In this section we will show that the following properties are compatible with the four constructions considered.
\begin{enumerate}[(i)]
\item $\mathcal H$ is unital/nonunital.
\item For all $f \in \mathcal H$, $f \colon X \to \mathbb C$ is continuous.
\item $\mathcal H \subset  C(\sigma(\mathcal H))$ has functions with arbitrarily small compact support.
\end{enumerate}
Finally, we address the compatibility of the spectrum of $\mathcal H$ with the various constructions and properties listed above.

\begin{prop}\label{RKHA closed under operations}
\textbf{RKHA} is closed under $\otimes$, $\oplus$, and it contains pullbacks and pushouts. Additionally, unital RKHAs, denoted as $\textbf{RKHA}_{\textbf{u}}$, are closed under these operations.
\end{prop}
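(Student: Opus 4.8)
The plan is to use the equivalent characterization established above: an RKHS is an RKHA precisely when pointwise multiplication $\Delta^*\colon\mathcal H\otimes\mathcal H\to\mathcal H$ is bounded, equivalently when $k_x\mapsto k_x\otimes k_x$ extends to a bounded $\Delta$. For each of the four constructions I would identify the reproducing kernels of the new space, then either build $\Delta$ explicitly from the comultiplications $\Delta_1,\Delta_2$ of the inputs or bound pointwise multiplication directly, and finally track the constant function to settle the unital case. Throughout I use that the span of reproducing kernels is dense, so it suffices to define $\Delta$ on kernels and check boundedness there.

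Tensor and sum are the formal cases. For $\otimes$ the kernels are $k_{(x_1,x_2)}=k^1_{x_1}\otimes k^2_{x_2}$, so on the dense span $\Delta=\sigma\,(\Delta_1\otimes\Delta_2)$, where $\sigma$ is the unitary of \textbf{Hilb} swapping the two middle legs of $(\mathcal H_1\otimes\mathcal H_1)\otimes(\mathcal H_2\otimes\mathcal H_2)$; this is bounded with $\|\Delta\|_{op}\le\|\Delta_1\|_{op}\|\Delta_2\|_{op}$. For $\oplus$, each kernel $k_x$ lies in a single summand, $\Delta(k_x)=\Delta_i(k^i_x)$, so $\Delta=\Delta_1\oplus\Delta_2$ maps into $(\mathcal H_1\otimes\mathcal H_1)\oplus(\mathcal H_2\otimes\mathcal H_2)\subset(\mathcal H_1\oplus\mathcal H_2)^{\otimes2}$ with norm $\le\max(\|\Delta_1\|_{op},\|\Delta_2\|_{op})$. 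The units transport as $1_{X_1\times X_2}=1\otimes1$ and $1_{X_1\sqcup X_2}=1\oplus1$.

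For the pullback along $\phi\colon S\to X$, I would work with $\mathcal H(k\circ\phi)=\{f\circ\phi\mid f\in\mathcal H\}$ and its infimum norm. Since $(f\circ\phi)(g\circ\phi)=(fg)\circ\phi$ and $fg\in\mathcal H$ with $\|fg\|_{\mathcal H}\le\|\Delta\|_{op}\|f\|_{\mathcal H}\|g\|_{\mathcal H}$, passing to the infimum over representatives yields $\|\xi\eta\|\le\|\Delta\|_{op}\|\xi\|\,\|\eta\|$ on the pullback, so it is an RKHA; the same bound also drops out of $\Delta_{\mathrm{pb}}=(T_\phi^*\otimes T_\phi^*)\,\Delta\,T_\phi$ via the isometry $T_\phi$. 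Unitality is immediate from $1\circ\phi=1$.

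The pushout is the most involved case and the main obstacle, since here one cannot simply manipulate bounded operators formally. The pushout $\mathcal H(k_\phi)$ is unitarily identified via $U$ with the closed subspace $\tilde{\mathcal H}=\{f\in\mathcal H\mid f\text{ is constant on the fibers of }\phi\}$, and I would proceed in three steps. First, $\tilde{\mathcal H}$ is a subalgebra: if $f,g$ are constant on fibers then so is $fg$, and $fg\in\mathcal H$ by the RKHA hypothesis on $\mathcal H$, hence $fg\in\tilde{\mathcal H}$ with $\|fg\|\le\|\Delta\|_{op}\|f\|\|g\|$ — this is exactly where the hypothesis on $\mathcal H$ enters. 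Second, $U$ intertwines the two multiplications, which I verify by computing $U(f)(\phi(x))=\langle f,\tilde k_x\rangle_{\mathcal H}=\langle f,P_{\tilde{\mathcal H}}k_x\rangle=\langle f,k_x\rangle=f(x)$ (using $P_{\tilde{\mathcal H}}f=f$ and $U(\tilde k_x)=k_\phi(-,\phi(x))$), so that $U$ merely descends $f$ along $\phi$ and therefore $U(fg)=U(f)U(g)$ pointwise. Third, transporting the bound through the unitary $U$ shows pointwise multiplication on $\mathcal H(k_\phi)$ is bounded. The unit $1\in\tilde{\mathcal H}$ maps to $U(1)=1_{\phi(X)}$, the multiplicative identity of $\mathcal H(k_\phi)$, giving the unital statement. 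The delicate point is step two: because $U$ is defined only on kernels, its compatibility with products is not automatic and rests on the projection identity $U(f)=f\circ\phi^{-1}$ derived above.
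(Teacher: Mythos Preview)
Your proof is correct and follows essentially the same route as the paper: the tensor via the leg-swap unitary composed with $\Delta_1\otimes\Delta_2$, the sum via $\Delta_1\oplus\Delta_2$, the pullback via $(T_\phi^*\otimes T_\phi^*)\Delta T_\phi$, and the pushout by first checking that $\tilde{\mathcal H}$ is a closed subalgebra and then transporting through the unitary $U$. One small caution: in the pullback paragraph, the bilinear estimate $\|\xi\eta\|\le\|\Delta\|_{op}\|\xi\|\|\eta\|$ alone does \emph{not} establish the RKHA property (boundedness on the Hilbert tensor product is strictly stronger than boundedness of the bilinear map), so the conclusion ``so it is an RKHA'' at that point is premature---but your subsequent formula $\Delta_{\mathrm{pb}}=(T_\phi^*\otimes T_\phi^*)\Delta T_\phi$ is exactly the right argument and matches the paper.
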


\begin{proof}
To show that $\mathcal H_1 \otimes \mathcal H_2$ is an RKHA, observe that $(id_{\mathcal H_1} \otimes \tau \otimes id_{\mathcal H_2}) \circ \Delta_1 \otimes \Delta_2 (k_x \otimes k_y)  = (k_x \otimes k_y) \otimes (k_x \otimes k_y)$ is a bounded map where $\tau \colon \mathcal H_1 \otimes \mathcal H_2 \to \mathcal H_2 \otimes \mathcal H_1$ is the unitary induced by $\tau(\xi \otimes \eta) = \eta \otimes \xi$.

Trivially, $H_1 \oplus H_2$ has a bounded coproduct given by $\Delta_1 \oplus \Delta_2$.

Let $\phi \colon S \to X$ with $\mathcal H \subset \mathcal L(X)$ and RKHA. Since $T_\phi \colon \mathcal H(k \circ \phi) \to \mathcal H(k)$ is an isometry, it has a bounded left inverse $T_\phi^*$, $T_\phi^*T_\phi = id_{\mathcal H(k \circ \phi)}$. Then by inspection $(T_\phi^* \otimes T_\phi^*)\Delta T_\phi \colon \mathcal H(k \circ \phi) \to \mathcal H(k \circ \phi) \otimes \mathcal H(k \circ \phi)$ is bounded and satisfies the coproduct property.

Let $\phi \colon X \to S$ with $\mathcal H \subset \mathcal L(X)$ an RKHA. Observe that $$\tilde{\mathcal H} = \set{f \in \mathcal H(k)}{f(x_1)=f(x_2) \text{ whenever } \phi(x_1) = \phi(x_2)}$$ is a closed subalgebra of $\mathcal H$. Therefore $\tilde{\Delta}^*=\Delta^*|_{\tilde{\mathcal H}\otimes \tilde{\mathcal H}} \colon \tilde{\mathcal H} \otimes \tilde{\mathcal H} \to \tilde{\mathcal H}$ is bounded and implements pointwise multiplication. As a result $\tilde{\Delta}(\tilde{k}_x) = \tilde{k}_x \otimes \tilde{k}_x$ and so $\tilde{\mathcal H}\subset \mathcal L(X)$ is an RKHA. Finally, $T_\phi \colon \tilde{\mathcal H} \to \mathcal H(k_\phi)$ given by $U_\phi(\tilde{k}_x) = k_\phi(-,\phi(x))$ defines a unitary. Hence, $(U_\phi\otimes U_\phi)\tilde{\Delta}U_{\phi}^*$ implements the coproduct on $\mathcal H(k_\phi)$.

For unital RKHAs, clearly $1_{\mathcal H_1} \oplus 1_{\mathcal H_2}$ and $1_{\mathcal H_1} \otimes 1_{\mathcal H_2}$ are the units for $\mathcal H_1 \oplus \mathcal H_2$ and $\mathcal H_1 \otimes \mathcal H_2$ respectively. The pushout is unital since the unit trivially belongs to $\tilde{\mathcal H}$. For the pullback, $T_\phi^*(1)$ is the unit since $T_\phi^*$ is multiplicative and $T_\phi$ is an isometry
$$M_{T_\phi^*(1)}(f) = T_\phi^*(1)f = T_\phi^*(1) \cdot T_\phi^*T_\phi(f) = T_\phi^*(1 \cdot T_\phi(f)) = f \quad \text{for } f \in \mathcal H(k \circ \phi).$$
\end{proof}

We can also characterize the subcategory \textbf{RKHA} by the existence of a natural transformation, $\Delta$, which assigns to each object $\mathcal H$ the corresponding comultiplication $\Delta_\mathcal H$. $\Delta$ is the unique natural transformation from the functor $id$ to $\otimes \colon \textbf{RKHA} \to \textbf{RKHA}$ by $\otimes (\mathcal H) = \mathcal H \otimes \mathcal H$ and $\otimes(T) = T \otimes T$.

\begin{proof}
Let $T \colon \mathcal H_1 \to \mathcal H_2$ be a morphism and observe that
$$\begin{tikzcd}
\mathcal H_1 \arrow[r, "\Delta_{\mathcal H_1}"] \arrow[d, "T"'] & \mathcal H_1 \otimes \mathcal H_1 \arrow[d, "T\otimes T"]\\
\mathcal H_2 \arrow[r, "\Delta_{\mathcal H_2}"'] & \mathcal H_2 \otimes \mathcal H_2
\end{tikzcd}$$
commutes. Therefore, $\Delta$ is a natural transformation between the functors $id$ and $\otimes$.

Suppose that $\Delta'$ was another natural transformation between these two functors. Consider the morphism $T_x \colon \mathbb C \to \mathcal H$ given by $T_x(1) = k_x$. Since the identity is the unique automorphism of $\mathbb C$ as an RKHA on one point and the following diagram commutes,
$$\begin{tikzcd}
\mathbb C \arrow[r, "\Delta'_{\mathbb C}"] \arrow[d, "T_x"] & \mathbb C \otimes \mathbb C \arrow[d, "T_x\otimes T_x"]\\
\mathcal H \arrow[r, "\Delta'_{\mathcal H}"] & \mathcal H \otimes \mathcal H
\end{tikzcd}$$
$\Delta'_{\mathcal H}(k_x) = k_x \otimes k_x$, so $\Delta_\mathcal H ' = \Delta_\mathcal H$.
\end{proof}

The category $\textbf{RKHA}$ can be identified as the largest subcategory of $\textbf{RKHS}$ containing $\mathbb C$, such that there exists a natural transformation between $id$ and $\otimes$.

\begin{defn}
Given an RKHA, $(\mathcal H, \Delta)$, define the cospectrum of $\mathcal H$ as
$$\sigma_{co}(\mathcal H) = \set{\xi \in \mathcal H \backslash \{0\}}{\Delta(\xi) = \xi \otimes \xi}.$$
The cospectrum is equipped with the weak topology from $\mathcal H$.
\end{defn}

By the definition of comultiplication, the cospectrum of $\mathcal H \subset \mathcal L(X)$ contains all nonzero kernel sections $k_x$, $x \in X$. In fact, the cospectrum of $(\mathcal H, \Delta)$ with the weak topology and the spectrum of the abelian Banach algebra $(\mathcal H, \Delta^*, \norm{\cdot}_{Ban})$ with the weak-$*$ topology are isomorphic.

\begin{proof}
Let $\chi \colon \mathcal H \to \mathbb C$ be a continuous linear functional with respect to the Banach algebra norm. Since the Banach algebra norm is equivalent to the Hilbert space norm, $\chi$ is also a bounded linear functional on $\mathcal H$ as a Hilbert space. Then there exists a unique $\xi_{\chi} \in \mathcal H$ such that $\chi(f) = \braket{f}{\xi_\chi}$ and
$$\chi(fg) = \braket{\Delta^*(f \otimes g)}{\xi_\chi} = \braket{f \otimes g}{\Delta(\xi_\chi)}$$
$$\chi(f)\chi(g)=\braket{f}{\xi_\chi}\braket{g}{\xi_\chi} = \braket{f \otimes g}{\xi_\chi\otimes\xi_\chi}$$
Hence, $\chi$ being a character is equivalent to $\xi_\chi \in \sigma_{co}(\mathcal H)$.
By definition, the weak topology on $\sigma_{co}(\mathcal H)$ is the same as the topology on $\sigma(\mathcal H)$.
\end{proof}

Let $\mathcal H \subset \mathcal L(X)$ be a unital RKHA. Then $\braket{1}{k_x} = 1 $ for all $x \in X$, $k_x \neq 0$ and $\Gamma \colon X \to \sigma_{co}(\mathcal H)$ by $\Gamma(x) = k_x$ is well-defined. Similarly, for RKHAs with strictly positive definite kernels $\Gamma \colon X \to \sigma_{co}(\mathcal H)$ is well-defined. In the nonunital case we define $\Gamma \colon X \to \sigma_{co}(\mathcal H) \cup \{0\}$. We refer to these maps collectively as the Gelfand map. Given a morphism between unital RKHAs (resp. nonunital RKHAs) $T \colon \mathcal H_1 \to \mathcal H_2$, then $T$ restricts to a map $T \colon \sigma_{co}(\mathcal H_1) \to \sigma_{co}(\mathcal H_2)$ (resp. $T \colon \sigma_{co}(\mathcal H_1)\cup\{0\} \to \sigma_{co}(\mathcal H_2)\cup\{0\}$) since $\Delta_{\mathcal H_2}T = (T \otimes T)\Delta_{\mathcal H_1}$.

\begin{lem}
Let $\mathcal H \subset \mathcal L(X)$ be an RKHA. Then $\Gamma \colon X \to \sigma_{co}(\mathcal H)\cup\{0\}$ by $\Gamma(x)=k_x$ is continuous iff $\mathcal H \subset C(X)$.
\end{lem}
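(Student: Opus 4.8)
The plan is to exploit that the topology on $\sigma_{co}(\mathcal H)\cup\{0\}$ is the subspace topology inherited from the weak topology on $\mathcal H$, and that the weak topology is, by definition, the initial (coarsest) topology making every functional $\braket{\,\cdot\,}{\eta}$, $\eta\in\mathcal H$, continuous. By the universal property of an initial topology, a map $\Gamma\colon X\to\sigma_{co}(\mathcal H)\cup\{0\}$ is continuous if and only if each composite $x\mapsto\braket{\Gamma(x)}{\eta}$ is continuous as a map $X\to\mathbb C$. The entire statement will then reduce to identifying these composites.

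First I would record the reproducing identity in the form needed here. Since the inner product is linear in its first slot, $\eta(x)=\braket{\eta}{k_x}$ gives
\[
\braket{\Gamma(x)}{\eta}=\braket{k_x}{\eta}=\overline{\braket{\eta}{k_x}}=\overline{\eta(x)}\qquad(x\in X,\ \eta\in\mathcal H).
\]
When $k_x=0$ both sides vanish, so $\Gamma$ is well-defined into $\sigma_{co}(\mathcal H)\cup\{0\}$ by the earlier observation that every nonzero kernel section lies in the cospectrum. Thus the composite $\braket{\Gamma(\,\cdot\,)}{\eta}$ is precisely the function $\overline\eta$ on $X$.

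Combining the two observations, $\Gamma$ is continuous if and only if $\overline\eta\in C(X)$ for every $\eta\in\mathcal H$. As complex conjugation is a homeomorphism of $\mathbb C$, this holds if and only if $\eta\in C(X)$ for every $\eta$, that is, $\mathcal H\subset C(X)$; both implications of the biconditional are read off from this single equivalence.

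The argument is short, so I expect no serious obstacle; the points that demand care are (i) using the inner-product convention correctly so that $\braket{k_x}{\eta}=\overline{\eta(x)}$ rather than $\eta(x)$, and (ii) noting that the reduction to the subspace topology is legitimate, since the subspace topology induced from an initial topology is itself the initial topology of the restricted functionals, so the universal property applies verbatim. An equivalent net formulation, namely that $x_\alpha\to x$ in $X$ forces $k_{x_\alpha}\to k_x$ weakly if and only if $\eta(x_\alpha)\to\eta(x)$ for all $\eta\in\mathcal H$, could replace the universal-property phrasing if preferred.
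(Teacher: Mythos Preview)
Your proof is correct and is essentially the same argument as the paper's: both reduce continuity of $\Gamma$ to continuity of every function $x\mapsto\braket{f}{k_x}=f(x)$, which is exactly the condition $\mathcal H\subset C(X)$. The only difference is cosmetic---you invoke the universal property of the initial (weak) topology while the paper uses the equivalent net characterization you mention at the end.
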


\begin{proof}
Suppose that $\Gamma$ is continuous and $(x_i)_{i \in I}$ is a net converging to $x$. Then $(k_{x_i})_{i \in I}$ converges to $k_x$ in the weak topology and $f(x_i) = \braket{f}{k_{x_i}}$ which converges to $\braket{f}{k_x} = f(x)$. Since this follows for every net, $\mathcal H \subset C(X)$.

If $\mathcal H \subset C(X)$, then for every $f \in \mathcal H$ and net $(x_i)_{i \in I}$ converging to $x$, $f(x_i) = \braket{f}{k_{x_i}}$ converges to $\braket{f}{k_x} = f(x)$. Therefore $\Gamma$ is continuous.
\end{proof}

Let $\textbf{RKHA}_{\textbf{cont.}}$ denote the subcategory of RKHAs of continuous functions.

\begin{prop}
$\textbf{RKHA}_{\textbf{cont.}}$ is closed under $\otimes$, $\oplus$, pullbacks by continuous maps and pushouts by quotient maps where $\mathcal H_1 \otimes \mathcal H_2 \subset C(X_1 \times X_2)$ and $\mathcal H_1 \oplus \mathcal H_2 \subset C(X_1 \sqcup X_2)$.
\end{prop}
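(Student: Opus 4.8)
The plan is to reduce every case to the preceding lemma, which identifies $\mathcal H\subset C(X)$ with weak continuity of the Gelfand map $\Gamma\colon x\mapsto k_x$ into $\sigma_{co}(\mathcal H)\cup\{0\}$. Since Proposition~\ref{RKHA closed under operations} already shows that each of the four constructions returns an RKHA, the only new content is continuity of the member functions, and by the lemma this amounts to checking that the Gelfand map of the output is weakly continuous given that the Gelfand maps $\Gamma_i$ of the inputs are. Throughout I will use the uniform norm bound $\norm{k_x}_{\mathcal H}=\sqrt{k(x,x)}\le\norm{\Delta}_{op}$ from~\eqref{eq:sqrt_kx}, which guarantees that each $\Gamma_i$ has norm-bounded image.

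For the tensor product the kernel sections factor as $k_{(x_1,x_2)}=k^1_{x_1}\otimes k^2_{x_2}$, so the Gelfand map of $\mathcal H_1\otimes\mathcal H_2$ is the composite of $(x_1,x_2)\mapsto(\Gamma_1(x_1),\Gamma_2(x_2))$ with the elementary-tensor map $(\xi,\eta)\mapsto\xi\otimes\eta$. The first map is continuous by hypothesis; the second I would show is jointly weak--weak continuous when restricted to norm-bounded sets, by testing against elementary tensors $a\otimes b$ and using $\braket{\xi\otimes\eta}{a\otimes b}=\braket{\xi}{a}\braket{\eta}{b}$ together with density of $\text{span}\set{a\otimes b}{a\in\mathcal H_1,\,b\in\mathcal H_2}$. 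Since the images of $\Gamma_1,\Gamma_2$ are bounded by~\eqref{eq:sqrt_kx}, this applies and gives $\mathcal H_1\otimes\mathcal H_2\subset C(X_1\times X_2)$. Equivalently, one can argue directly: the bound $\abs{F(x_1,x_2)}\le\norm{\Delta_1}_{op}\norm{\Delta_2}_{op}\norm{F}_{\mathcal H}$ shows the Hilbert norm dominates the sup norm, so the continuous finite sums $\sum f_i\otimes g_i$, which are dense, converge uniformly to their limit. For $\oplus$, I would use that a map out of $X_1\sqcup X_2$ is continuous iff its restriction to each clopen summand is; on $X_1$ the Gelfand map is $x\mapsto(k^1_x,0)$, i.e.\ $\Gamma_1$ followed by the isometric, hence weak--weak continuous, inclusion $\mathcal H_1\hookrightarrow\mathcal H_1\oplus\mathcal H_2$, and symmetrically on $X_2$.

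The pullback and pushout are handled directly. If $\phi\colon S\to X$ is continuous, then every element of $\mathcal H(k\circ\phi)=\set{f\circ\phi}{f\in\mathcal H}$ is a composition of a continuous function $f\in\mathcal H\subset C(X)$ with $\phi$, hence continuous, so $\mathcal H(k\circ\phi)\subset C(S)$ with no further work. For a pushout along a quotient map $\phi\colon X\to S$, I would use the unitary $U_\phi\colon\tilde{\mathcal H}\to\mathcal H(k_\phi)$: for $g=U_\phi(f)$ and $s=\phi(x)$ one computes $g(\phi(x))=\braket{f}{\tilde k_x}_{\tilde{\mathcal H}}=\braket{f}{k_x}_{\mathcal H}=f(x)$, using $\tilde k_x=P_{\tilde{\mathcal H}}k_x$ and $f\in\tilde{\mathcal H}$. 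Thus $g\circ\phi=f\in\tilde{\mathcal H}\subset\mathcal H\subset C(X)$. Because $\phi$ is a quotient map, its universal property says $g$ is continuous precisely when $g\circ\phi=f$ is; as $f$ is continuous, so is $g$, giving $\mathcal H(k_\phi)\subset C(S)$.

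The only genuinely analytic step is the joint weak continuity of the tensor map, and this is exactly where the RKHA hypothesis enters through the uniform kernel bound~\eqref{eq:sqrt_kx}: without norm-boundedness of the images of $\Gamma_1,\Gamma_2$ the map $(\xi,\eta)\mapsto\xi\otimes\eta$ need not be jointly weakly continuous. The other place a hypothesis is essential is the pushout, where the quotient-map assumption is precisely what upgrades continuity of $g\circ\phi$ to continuity of $g$; for a non-quotient $\phi$ the descended function could fail to be continuous. The remaining verifications (for $\oplus$ and the pullback) are formal.
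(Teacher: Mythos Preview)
Your proof is correct and, for the tensor product and direct sum, essentially identical to the paper's: both test weak convergence of $k_{x_i}\otimes k_{y_i}$ against elementary tensors and then pass to the full tensor product via density and the uniform kernel bound~\eqref{eq:sqrt_kx}.

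Where you diverge is in the pullback and pushout. The paper runs both through the Gelfand-map lemma and commuting squares: for the pullback it writes $\Gamma_S=T_\phi^*\,\Gamma_X\,\phi$ (using that $T_\phi$ is an isometry so $T_\phi^*$ is a bounded, hence weakly continuous, left inverse), and for the pushout it argues that $\phi^{-1}\Gamma_S^{-1}(U)=\Gamma_X^{-1}T_\phi^{-1}(U)$ is open and then invokes the quotient property to conclude $\Gamma_S^{-1}(U)$ is open. You instead work directly at the level of functions: for the pullback you observe that every element is literally $f\circ\phi$ with $f$ continuous, and for the pushout you compute $g\circ\phi=f$ via the unitary $U_\phi$ and then apply the universal property of quotient maps. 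Your route is more elementary and arguably more transparent---it avoids the detour through $\sigma_{co}$ entirely for these two cases---while the paper's diagrammatic approach has the virtue of uniformity (every case is reduced to weak continuity of $\Gamma$) and makes the role of the morphisms $T_\phi$ more visible. Both arguments use the quotient hypothesis at exactly the same point, and neither needs the RKHA structure for the pullback or pushout beyond what Proposition~\ref{RKHA closed under operations} already supplies.
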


\begin{proof}
    For the tensor product it suffices to show that $\Gamma \colon X_1 \times X_2 \to \sigma_{co}(\mathcal H_1 \otimes \mathcal H_2)\cup\{0\}$ by $\Gamma(x,y)=k_x \otimes k_y$ is continuous. Let $(x_i,y_i)_{i \in I}$ be a net converging to $(x,y)$. Then for $f \in \mathcal H_1$ and $g \in \mathcal H_2$, $\braket{f \otimes g}{k_{x_i} \otimes k_{y_i}} = f(x_i)g(y_i)$ converges to $f(x)g(y)= \braket{f \otimes g}{k_x \otimes k_y}$. Hence, on a dense subspace $ \xi \in span\set{f\otimes g}{f \in \mathcal H_1, g \in \mathcal H_2} \subset \mathcal H_1 \otimes \mathcal H_2$, we have $\braket{\xi}{k_{x_i} \otimes k_{y_i}} \to \braket{\xi}{k_x \otimes k_y}$. By~\eqref{eq:sqrt_kx}, $\set{k_x \otimes k_y}{x \in X_1, y \in X_2}$ is a norm bounded set by $\norm{\Delta}^2$. To show weak convergence of $k_{x_i} \otimes k_{y_i}$ to $k_x \otimes k_y$ and continuity of $\Gamma$, consider $ \varepsilon >0$, $\xi \in \mathcal H_1 \otimes \mathcal H_2$, and choose $\eta \in span\set{f \otimes g}{f \in \mathcal H_1, g \in \mathcal H_2}$ such that $\norm{\xi-\eta} <\varepsilon$. Then
$$\limsup_{i \to \infty}\abs{\braket{\xi}{k_{x_i}\otimes k_{y_i} - k_x \otimes k_y}} \leq 2\norm{\Delta}^2 \varepsilon +\limsup_{i \to \infty} \abs{\braket{\eta}{k_{x_i}\otimes k_{y_i} - k_x \otimes k_y}}\leq 2\norm{\Delta}^2 \varepsilon.$$

Closure under $\oplus$ is trivial.

For the pullback, observe that
$$\begin{tikzcd}
S \arrow[r,"\phi"] \arrow[d,"\Gamma_S"] & X \arrow[d,"\Gamma_X"] \\ \sigma_{co}(\mathcal H(k \circ \phi))\cup\{0\} \arrow[r,"T_\phi"] & \sigma_{co}(\mathcal H(k))\cup\{0\}
\end{tikzcd}$$
commutes and $T_\phi$ is an isometry on $\mathcal H(k \circ \phi)$. Therefore, $\Gamma_S = T_\phi^* \Gamma_X \phi$ is continuous since bounded linear operators are automatically weakly continuous.

For the pushout, observe that
$$\begin{tikzcd}
X \arrow[r,"\phi"] \arrow[d,"\Gamma_X"] & S \arrow[d,"\Gamma_S"] \\ \sigma_{co}(\mathcal H(k))\cup\{0\} \arrow[r,"T_\phi"] & \sigma_{co}(\mathcal H(k_\phi))\cup\{0\}
\end{tikzcd}$$
commutes. Thus, for $U \subset \sigma_{co}(\mathcal H(k_\phi))\cup\{0\}$ open, we have $\phi^{-1}\Gamma_S^{-1}(U) = \Gamma_X^{-1}T_\phi^{-1}(U)$ and $\phi^{-1}\Gamma_S^{-1}(U)$ is open by continuity of $T_\phi\Gamma_X$. Since $\phi$ is a quotient map, $\phi^{-1}\Gamma_S^{-1}(U)$ is open implies that $\Gamma_S^{-1}(U)$ is open, and we conclude that $\Gamma_S$ is continuous.

\end{proof}

Similar to the unitalization of $C^*$-algebras, RKHAs have a well defined unitalization that is compatible with the spectrum.

\begin{defn}
    Let $\mathcal H \subset \mathcal L(X)$ be an RKHA without a unit. Define the unitalization of $\mathcal H$ by $\tilde{\mathcal H}=\mathbb C \oplus \mathcal H \subset \mathcal L(X \cup \{\infty\})$ as a Hilbert space where $1 \oplus 0$ is the constant one function (corresponding to the unit of $\tilde{\mathcal H}$) and $0 \oplus f$ extends $f \in \mathcal H$ by $f(\infty)=0$. Then $\tilde{\mathcal H}$ is a unital RKHA with the kernel $\tilde k \colon (X \cup \{\infty\}) \times (X \cup \{\infty\}) \to \mathbb C$
$$\tilde{k}(x,y)=\left\lbrace\begin{array}{cc} 1+k(x,y) & \text{if } x,y \neq \infty\\ 1 & \text{else} \end{array}\right. .$$
\end{defn}

Note that this definition is different from the direct sum construction $\mathbb C \oplus \mathcal H \subset \mathcal L(X \sqcup \{ \infty \})$ wherein $1 \oplus 0$ is identified with a non-constant function mapping $x \in X$ to 0 and $\infty$ to 1, and thus not acting as a unit under pointwise multiplication. We now verify well-definedness of the unitalization. First, it is immediate that $\tilde k$ is a positive definite function. Second, the inner product on $\tilde{\mathcal H}$ from $\mathbb C \oplus \mathcal H$ coincides with the inner product from the kernel $\tilde{k}$ since $\braket{1 \oplus k_y}{1\oplus k_x} = 1+k(x,y)$. Finally, observe that the new comultiplication is given by
$$\tilde{\Delta} \colon \mathbb C \oplus \mathcal H \to \mathbb C \oplus \mathbb C \otimes \mathcal H \oplus \mathcal H \otimes \mathbb C \oplus \mathcal H \otimes \mathcal H \quad \quad \tilde{\Delta}(\tilde{k}_x)=1 \otimes 1 \oplus 1 \otimes k_x \oplus k_x \otimes 1 \oplus k_x \otimes k_x$$
$$\tilde{\Delta} = \left[ \begin{array}{ccc} 1 & 0\\ 0 & 1 \otimes -\\ 0 & -\otimes 1\\ 0 & \Delta \end{array}\right].$$
which is bounded. Therefore $(\tilde{\mathcal H}, \tilde{\Delta})$ is an RKHA. The definition of $\tilde\Delta$ leads to the following formula for multiplication on $\tilde{\mathcal H}$.
$$ (a1 \oplus f)(b1 \oplus g) = \tilde\Delta^*((a1 \oplus f) \otimes (b1 \oplus g)) = ab1 \oplus ag \oplus bf \oplus fg$$

\begin{prop}
    The spectrum of the unitalization is the one point compactification of the spectrum. Furthermore, the one point compactification of $\sigma(\mathcal H)$ is isomorphic to $\sigma_{co}(\mathcal H)\cup\{0\}$ under the isomorpshism $\sigma_{co}(\mathcal H) \cong \sigma(\mathcal H)$ and identification of $\braket{ \cdot}{1}$ with $0$.
\end{prop}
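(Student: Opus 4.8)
The plan is to prove the two assertions in sequence: first identify $\sigma(\tilde{\mathcal H})$ with the one-point compactification of $\sigma(\mathcal H)$ by the standard Gelfand theory of unitalizations, and then match that compactification with $\sigma_{co}(\mathcal H)\cup\{0\}$ by a direct computation of the cospectrum of $\tilde{\mathcal H}$, combined with the already-established homeomorphism $\sigma_{co}(A)\cong\sigma(A)$ applied to $A=\tilde{\mathcal H}$.

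For the first assertion I would begin at the level of sets. Since $\mathcal H$ is an ideal of $\tilde{\mathcal H}=\mathbb C\oplus\mathcal H$, the restriction to $\mathcal H$ of any $\tilde\chi\in\sigma(\tilde{\mathcal H})$ is multiplicative, hence either a nonzero character of $\mathcal H$ or identically zero. In the first case unitality forces $\tilde\chi(1_{\tilde{\mathcal H}})=1$, so $\tilde\chi(a\oplus f)=a+\chi(f)$ is the unique extension of $\chi\in\sigma(\mathcal H)$; in the second case $\tilde\chi(a\oplus f)=a$, which is precisely $\chi_\infty=ev_\infty=\braket{\cdot}{1_{\tilde{\mathcal H}}}$ (using $\tilde k_\infty=1\oplus 0$). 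This yields a bijection $\sigma(\tilde{\mathcal H})=\sigma(\mathcal H)\sqcup\{\chi_\infty\}$. For the topology I would invoke that $\tilde{\mathcal H}$ is a unital commutative Banach algebra, so $\sigma(\tilde{\mathcal H})$ is compact Hausdorff, while $\sigma(\mathcal H)$ is locally compact Hausdorff. A short check shows that the weak-$*$ subspace topology on $\sigma(\mathcal H)\subset\sigma(\tilde{\mathcal H})$ agrees with its own weak-$*$ topology (restrict convergence to the $\mathcal H$-component) and that $\{\chi_\infty\}$ is closed, so $\sigma(\mathcal H)$ is open with one-point complement. By uniqueness of the one-point compactification of a locally compact Hausdorff space, $\sigma(\tilde{\mathcal H})\cong\sigma(\mathcal H)^+$ with $\chi_\infty$ the point at infinity.

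For the second assertion I would compute $\sigma_{co}(\tilde{\mathcal H})$ explicitly from the matrix form of $\tilde\Delta$. Writing $\eta=a\cdot 1\oplus f$ and comparing $\tilde\Delta(\eta)$ with $\eta\otimes\eta$ componentwise in $\mathbb C\oplus(\mathbb C\otimes\mathcal H)\oplus(\mathcal H\otimes\mathbb C)\oplus(\mathcal H\otimes\mathcal H)$, the scalar component forces $a=a^2$, the two mixed components force $(1-a)(1\otimes f)=0$, and the last component forces $\Delta(f)=f\otimes f$; excluding $\eta=0$ this pins down $a=1$ and $f\in\sigma_{co}(\mathcal H)\cup\{0\}$. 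Hence $\sigma_{co}(\tilde{\mathcal H})=\{1\oplus f : f\in\sigma_{co}(\mathcal H)\cup\{0\}\}$, and since the $\mathbb C$-coordinate is constant the map $1\oplus f\mapsto f$ is a homeomorphism for the weak topologies onto $\sigma_{co}(\mathcal H)\cup\{0\}$. Composing $\sigma(\mathcal H)^+\cong\sigma(\tilde{\mathcal H})\cong\sigma_{co}(\tilde{\mathcal H})\cong\sigma_{co}(\mathcal H)\cup\{0\}$ and tracking the basepoint $1_{\tilde{\mathcal H}}=1\oplus 0$, which corresponds simultaneously to $\chi_\infty=\braket{\cdot}{1_{\tilde{\mathcal H}}}$ and to $f=0$, gives exactly the claimed identification of $\braket{\cdot}{1}$ with $0$.

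I expect the main obstacle to be the topological half of the first assertion rather than any algebra: verifying that the weak-$*$ topology on $\sigma(\tilde{\mathcal H})$ restricts to the correct topology on the open subset $\sigma(\mathcal H)$ and that neighborhoods of $\chi_\infty$ are precisely complements of weak-$*$ compact subsets of $\sigma(\mathcal H)$. Everything else — the character bookkeeping and the cospectrum computation — is routine once the matrix form of $\tilde\Delta$ is in hand.
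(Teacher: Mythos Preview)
Your proof is correct and close in spirit to the paper's, though the two assertions are handled with inverted emphasis. The paper works entirely on the character side: it defines an explicit map $\Psi:\sigma(\mathcal H)\to\sigma(\tilde{\mathcal H})$ by $\Psi(\chi)=\braket{-}{1}+\chi\circ p_{\mathcal H}$, verifies multiplicativity by hand, and checks that both $\Psi$ and the restriction $\Psi(\chi)\mapsto\Psi(\chi)|_{\mathcal H}$ are weak-$*$ continuous, so $\sigma(\mathcal H)$ is homeomorphic to $\Psi(\sigma(\mathcal H))$; the only remaining character is $\braket{\cdot}{1}$, and the identification with $\sigma_{co}(\mathcal H)\cup\{0\}$ then falls out of the general isomorphism $\sigma\cong\sigma_{co}$ applied to $\tilde{\mathcal H}$. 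You instead invoke uniqueness of the one-point compactification for the topological step, and for the second assertion you compute $\sigma_{co}(\tilde{\mathcal H})$ directly from the block form of $\tilde\Delta$ rather than passing through characters. Your cospectrum calculation is a pleasant alternative that leverages the coalgebra picture; the paper's route is slightly more economical in that it avoids the separate $\tilde\Delta$ computation and dispatches both claims at once on the spectrum side.
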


\begin{proof}
    Define $\Psi \colon \sigma(\mathcal H) \to \sigma(\tilde{\mathcal H})$ by $\Psi(\chi) = \braket{-}{1} +\chi \circ p_{\mathcal H}$ where $p_{\mathcal H}$ is the orthogonal projection onto $\mathcal H$. This is clearly well defined and weak-$*$ to weak-$*$ continuous. In particular, for $\tilde f = a1 \oplus f$ and $\tilde g = b1 \oplus g$,
\begin{multline*}
    \Psi(\chi)(\tilde f \tilde g) = \braket{(a1\oplus f)(b1\oplus g)}{1} + \chi p_\mathcal H ((a1\oplus f)(b1\oplus g))\\
    = ab1 + a\chi(g) + b\chi(f) + \chi(f)\chi(g) = (a + \chi(f))(b+\chi(g)) = (\Psi(\chi)(\tilde f))(\Psi(\chi)(\tilde g)).
\end{multline*}
Observe that $\Psi(\chi)|_\mathcal H =\phi$ is also weak-$*$ continuous from the image of $\Psi$ onto $\sigma(\mathcal H)$. Therefore, $\sigma(\mathcal H)$ is homeomorphic to the image of $\Psi$.

Let $\chi \in \sigma(\tilde{\mathcal H})$ and suppose that $\chi|_{\mathcal H} \neq 0$. Since $\chi$ is unital, $\Psi(\chi|_{\mathcal H}) = \chi$. If $\chi|_{\mathcal H} = 0$ then $\chi = \braket{ \cdot}{1}$. Hence, $\sigma(\tilde{\mathcal H}) = \set{\Psi(\chi)}{\chi \in \sigma(\mathcal H)} \cup \{\braket{\cdot}{1}\} \cong \sigma_{co}(\mathcal H)\cup\{0\}$ is the one point compactification of $\sigma(\mathcal H) \cong \set{\Psi(\chi)}{\chi \in \sigma(\mathcal H)}$.

\end{proof}

The next subcategory of $\textbf{RKHA}$ that we consider are RKHAs with countable weak approximate units, denoted as $\textbf{RKHA}_{\textbf{cwau}}$.

\begin{prop}
$\textbf{RKHA}_{\textbf{cwau}}$ is closed under $\otimes$, $\oplus$, and pullbacks.
\end{prop}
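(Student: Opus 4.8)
The plan is to produce, for each of the three constructions, an explicit countable candidate approximate unit built from those of the factors, and to reduce the two defining conditions (operator-norm boundedness of the associated multiplication operators and their weak-operator convergence to the identity) to the corresponding conditions on the factors. The organizing observation is that in each case the multiplication operator of the candidate factors through the multiplication operators of the pieces: writing $(\eta_n)$ and $(\zeta_n)$ for countable weak approximate units of $\mathcal H_1$ and $\mathcal H_2$ (allowing the constant sequence $1_{\mathcal H_i}$ when a factor is unital, so the argument is uniform), I claim
$$M_{\eta_n\oplus\zeta_n} = M_{\eta_n}\oplus M_{\zeta_n}, \qquad M_{\eta_n\otimes\zeta_n} = M_{\eta_n}\otimes M_{\zeta_n}, \qquad M_{\eta_n\circ\phi} = T_\phi^*\, M_{\eta_n}\, T_\phi.$$
Each identity follows because multiplication is pointwise, together with the descriptions of the constructions in Section 1: on $X_1\sqcup X_2$ the product decomposes over the two summands, on $X_1\times X_2$ it factors over the two variables, and on $S$ one has $(f\circ\phi)(g\circ\phi)=(fg)\circ\phi$ combined with $T_\phi^*(f)=f\circ\phi$ and $T_\phi^*T_\phi=\operatorname{id}$.

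For the direct sum I would take $\eta_n\oplus\zeta_n$. Operator-norm boundedness is immediate since $\norm{M_{\eta_n}\oplus M_{\zeta_n}}_{op}=\max\{\norm{M_{\eta_n}}_{op},\norm{M_{\zeta_n}}_{op}\}$, and weak convergence is direct because $\mathcal H_1\oplus\mathcal H_2$ is an orthogonal direct sum, so $\braket{(M_{\eta_n}\oplus M_{\zeta_n})(f_1\oplus f_2)}{g_1\oplus g_2}$ splits as $\braket{M_{\eta_n}f_1}{g_1}+\braket{M_{\zeta_n}f_2}{g_2}\to\braket{f_1}{g_1}+\braket{f_2}{g_2}$, with no approximation needed. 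For the pullback I would take $\eta_n\circ\phi=T_\phi^*\eta_n\in\mathcal H(k\circ\phi)$. Boundedness follows from $\norm{T_\phi^*M_{\eta_n}T_\phi}_{op}\le\norm{M_{\eta_n}}_{op}$ since $T_\phi$ is an isometry, and weak convergence follows by conjugation: for $\xi,\zeta\in\mathcal H(k\circ\phi)$,
$$\braket{M_{\eta_n\circ\phi}\xi}{\zeta}=\braket{M_{\eta_n}T_\phi\xi}{T_\phi\zeta}\longrightarrow\braket{T_\phi\xi}{T_\phi\zeta}=\braket{\xi}{\zeta},$$
using that $M_{\eta_n}\to\operatorname{id}$ weakly on $\mathcal H$ against the fixed vectors $T_\phi\xi,T_\phi\zeta$ and that $T_\phi$ is isometric.

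For the tensor product I would take the diagonal sequence $\eta_n\otimes\zeta_n$, so that $M_{\eta_n\otimes\zeta_n}=M_{\eta_n}\otimes M_{\zeta_n}$ has $\norm{M_{\eta_n}\otimes M_{\zeta_n}}_{op}=\norm{M_{\eta_n}}_{op}\norm{M_{\zeta_n}}_{op}$, bounded in $n$. This is the step I expect to be the main obstacle, since weak-operator convergence is not preserved by the Hilbert-space tensor product in general. The point is that on an elementary tensor the pairing factors into a product of two scalar sequences,
$$\braket{(M_{\eta_n}\otimes M_{\zeta_n})(f_1\otimes f_2)}{g_1\otimes g_2}=\braket{M_{\eta_n}f_1}{g_1}\,\braket{M_{\zeta_n}f_2}{g_2},$$
and each factor is a bounded sequence converging to $\braket{f_1}{g_1}$ and $\braket{f_2}{g_2}$ respectively, so the product converges to $\braket{f_1\otimes f_2}{g_1\otimes g_2}$. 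By bilinearity this gives weak convergence on the algebraic span of elementary tensors, and the uniform bound $\sup_n\norm{M_{\eta_n}\otimes M_{\zeta_n}}_{op}<\infty$ upgrades this to all of $\mathcal H_1\otimes\mathcal H_2$ by the same $\varepsilon$-approximation against a dense subspace used in the proof that $\textbf{RKHA}_{\textbf{cont.}}$ is closed under $\otimes$. Finally I would record that each candidate is indexed by $\mathbb N$, so the resulting approximate units are again countable, completing the three closure statements.
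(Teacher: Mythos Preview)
Your proposal is correct and follows essentially the same approach as the paper: the same candidate approximate units $\eta_n\oplus\zeta_n$, $\eta_n\otimes\zeta_n$, and $T_\phi^*\eta_n$, and the same conjugation identity $M_{T_\phi^*f}=T_\phi^*M_fT_\phi$ for the pullback. You supply more detail than the paper does, particularly the density-plus-uniform-boundedness argument for weak convergence on the full tensor product, which the paper leaves implicit.
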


\begin{proof}
    If $\mathcal H_1$ and $\mathcal H_2$ have countable approximate units, then the nets can be paired together in a sum or tensor product, $\eta^{(1)}_n \oplus \eta^{(2)}_n$ and $\eta^{(1)}_n \otimes \eta^{(2)}_n$, which are weak approximate units for $\mathcal H_1 \oplus \mathcal H_2$ and $\mathcal H_1 \otimes \mathcal H_2$. 

Let $\phi \colon S \to X$, $\mathcal H \subset \mathcal L(X)$ and $T_\phi \colon \mathcal H(k \circ \phi) \to \mathcal H$ the corresponding isometry to the pullback construction. The candidate for a countable weak approximate unit is $\left(T_\phi^*(\eta_n)\right)_{n=1}^\infty$. Since $T_\phi: \mathcal H(k \circ \phi) \to \mathcal H$ is an RKHS morphism, $T_\phi^*$ is multiplicative. Moreover, since $T_\phi$ is an isometry, $T_\phi^* T_\phi$ is the identity on $\mathcal H(k \circ \phi)$. Using these facts, for every $f \in \mathcal H$ and $g \in \mathcal H(k \circ \phi)$ we get
$$ M_{T^*_\phi(f)}(g) = T_\phi^*(f)g = T_\phi^*(f)T_\phi^* T_\phi (g) = T_\phi^*(f T_\phi(g))$$
and so $M_{T^*_\phi(f)} = T^*_\phi M_f T_\phi$. Therefore, we have $M_{T^*_\phi(\eta_n)} = T^*_\phi M_{\eta_n} T_\phi$, and weak convergence of $M_{T^*_\phi(\eta_n)}$ to $Id_{\mathcal H(k\circ\phi)}$ follows from weak convergence of $M_{\eta_n}$ to $Id_{\mathcal H}$ and boundedness of $T_\phi$. Boundedness of  $\norm{M_{T_\phi^*(\eta_n)}}_{op}$ follows from norm-boundedness of $M_{\eta_n}$.
\end{proof}

The next theorem concerns the cospectrum and, equivalently, the spectrum as a functor from \textbf{RKHA} to \textbf{Top}. In the unital case, the target of the functor is the category of compact Hausdorff spaces with the cartesian product. In the not necessarily unital case, we use pointed compact Hausdorff spaces with the smash product
$$(X,p) \wedge (Y,q) = X\times Y / \sim \quad \quad (x,q) \sim (p,y) \text{ for all } x \in X, \, y \in Y.$$
We will also need the definition of a monoidal functor which can be found in \cite{EGNO15}.

For unital RKHAs $\mathcal H_1$ and $\mathcal H_2$ define the map $ \Phi \colon \sigma_{co}(\mathcal H_1) \times \sigma_{co}(\mathcal H_2) \to \sigma_{co}(\mathcal H_1 \otimes \mathcal H_2)$ by $\Phi(\xi_1,\xi_2) = \xi_1 \otimes \xi_2$. In the not necessarily unital case, we similarly consider $\Phi \colon \sigma_{co}(\mathcal H_1) \cup\{0\} \times \sigma_{co}(\mathcal H_2)\cup\{0\} \to \sigma_{co}(\mathcal H_1 \otimes \mathcal H_2)\cup \{0\}$ with $\Phi(\xi_1,\xi_2) = \xi_1 \otimes \xi_2$ and define $\tilde{\Phi} \colon \sigma_{co}(\mathcal H_1) \cup \{ 0 \} \times \sigma_{co}(\mathcal H_2) \cup \{ 0 \}/\Phi^{-1}(0) \to \sigma_{co}(\mathcal H_1 \otimes \mathcal H_2) \cup \{ 0 \}$ by identifying $\Phi^{-1}(0)$ as a single point.

\begin{thm}
    \label{thm:spec}
    For unital RKHAs, the map $\Phi$ is a natural isomorphism that, together with the spectrum, $sp(\mathcal H) = \sigma(\mathcal H) \cong \sigma_{co}(\mathcal H)$, defines a monoidal functor $(sp,\Phi) \colon (\textbf{RKHA}_\textbf{u},\otimes) \to (\textbf{Top}_\textbf{cpt,Haus}, \times)$. On the morphism spaces $T \in Mor(\mathcal H_1, \mathcal H_2)$, the functor is defined by $sp(T) = T|_{\sigma_{co}(\mathcal H_1)}$. Similarly, for RKHAs with countable weak approximate units, $(\tilde{sp},\tilde{\Phi}) \colon (\textbf{RKHA}_{\textbf{cwau}},\otimes) \to (\textbf{Top}_{\textbf{cpt,Haus}}^*,\wedge )$ defines a monoidal functor given by $\tilde{sp}(\mathcal H) = (\sigma(\mathcal H)\cup \{ \braket{-}{0} \}, 0 ) \cong (\sigma_{co}(\mathcal H) \cup\{0\},0)$ and $\tilde{sp}(T) = T|_{\sigma_{co}(\mathcal H_1)\cup\{0\}}$ where $\textbf{Top}^*_{\textbf{cpt,Haus}}$ is equipped with the smash product $\wedge$.
\end{thm}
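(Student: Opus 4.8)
The plan is to prove the unital statement directly and then bootstrap the $\textbf{RKHA}_{\textbf{cwau}}$ statement from it via the unitalization, working throughout in the cospectrum picture and using the earlier identification $\sigma_{co}(\mathcal H)\cong\sigma(\mathcal H)$ under which a group-like $\xi$ corresponds to the character $\braket{\cdot}{\xi}$. First I would check that $sp$ is a functor. That $sp(T)=T|_{\sigma_{co}(\mathcal H_1)}$ takes values in $\sigma_{co}(\mathcal H_2)$ follows from $\Delta_{\mathcal H_2}T=(T\otimes T)\Delta_{\mathcal H_1}$: if $\Delta_1\xi=\xi\otimes\xi$ then $\Delta_2(T\xi)=(T\xi)\otimes(T\xi)$, and $T\xi\neq 0$ in the unital case because the character attached to $T\xi$ is $\chi_\xi\circ T^*$, which is unital (since $T^*$ is a unital algebra morphism) and hence nonzero. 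Continuity is automatic as bounded operators are weak-$*$ continuous, and functoriality is immediate. Compactness and Hausdorffness of $\sigma(\mathcal H)$ is the standard Gelfand theory of the unital commutative Banach algebra $(\mathcal H,\Delta^*,\norm{\cdot}_{Ban})$, while $sp(\mathbb C)$ is a point, the monoidal unit of $(\textbf{Top}_{\textbf{cpt,Haus}},\times)$.

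Next I would show $\Phi(\xi_1,\xi_2)=\xi_1\otimes\xi_2$ is a natural isomorphism. Well-definedness uses $\Delta_{\mathcal H_1\otimes\mathcal H_2}=(\mathrm{id}\otimes\tau\otimes\mathrm{id})(\Delta_1\otimes\Delta_2)$ from Proposition~\ref{RKHA closed under operations}: applying it to $\xi_1\otimes\xi_2$ and swapping the two middle legs gives $(\xi_1\otimes\xi_2)\otimes(\xi_1\otimes\xi_2)$, and the result is nonzero. For bijectivity I pass to characters. Restricting a character of $\mathcal H_1\otimes\mathcal H_2$ to the unital subalgebras $\mathcal H_1\otimes 1$ and $1\otimes\mathcal H_2$ yields characters $\chi_1,\chi_2$; since $f\otimes g=(f\otimes 1)(1\otimes g)$, multiplicativity forces $\chi(f\otimes g)=\chi_1(f)\chi_2(g)$ on elementary tensors, so $\chi=\chi_1\otimes\chi_2$ by density and continuity (surjectivity), while the same restriction recovers $\chi_1,\chi_2$ from $\chi_1\otimes\chi_2$ (injectivity). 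A continuous bijection of compact Hausdorff spaces is a homeomorphism, and naturality is the identity $(T_1\otimes T_2)(\xi_1\otimes\xi_2)=(T_1\xi_1)\otimes(T_2\xi_2)$. The remaining associativity and unit coherence diagrams commute on elementary tensors of group-like elements, hence everywhere by density, reducing to the coherence of $\otimes$ in \textbf{Hilb} and $\times$ in \textbf{Top}.

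For $\textbf{RKHA}_{\textbf{cwau}}$ I would reduce to the unital case through the unitalization. By the proposition identifying $\sigma(\tilde{\mathcal H})$ with the one-point compactification, $\tilde{sp}(\mathcal H)=\sigma(\mathcal H)\cup\{0\}$ is compact Hausdorff, pointed at the functional vanishing on $\mathcal H$, and $\tilde{sp}(T)$ is pointed since $T0=0$ (with $T\xi$ allowed to fall into the basepoint). The key input is $\sigma(\mathcal H_1\otimes\mathcal H_2)\cong\sigma(\mathcal H_1)\times\sigma(\mathcal H_2)$ as locally compact Hausdorff spaces: I view $\mathcal H_1\otimes\mathcal H_2$ as the closed ideal inside the unital algebra $\tilde{\mathcal H}_1\otimes\tilde{\mathcal H}_2$, whose characters by the unital case are exactly the $\chi_1\otimes\chi_2$, and such a character restricts to a nonzero character of the ideal iff neither $\chi_i$ is the point at infinity; conversely any nonzero character $\chi$ of the ideal extends to $\tilde{\mathcal H}_1\otimes\tilde{\mathcal H}_2$ via $a\mapsto\chi(ax)/\chi(x)$ for $x$ in the ideal with $\chi(x)\neq0$. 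This yields the claimed homeomorphism, matching $\Phi$ under the cospectrum identification. Taking one-point compactifications and using $(X\times Y)^{+}\cong X^{+}\wedge Y^{+}$ for locally compact Hausdorff $X,Y$ turns it into $\tilde{sp}(\mathcal H_1\otimes\mathcal H_2)\cong\tilde{sp}(\mathcal H_1)\wedge\tilde{sp}(\mathcal H_2)$, with $\tilde\Phi$ the map induced after collapsing $\Phi^{-1}(0)=(\sigma_{co}(\mathcal H_1)\cup\{0\})\vee(\sigma_{co}(\mathcal H_2)\cup\{0\})$, and monoidal coherence inherited from the unital case.

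The main obstacle is the surjectivity of $\Phi$, i.e.\ identifying every character of the Hilbert space tensor product with a product character. This is not automatic for arbitrary Banach algebra tensor products; here it relies on unitality together with the density of elementary tensors in the unital case, and on the ideal-extension argument inside the unitalized tensor product in the cwau case. I expect the rest — functoriality, compactness, naturality, and the coherence diagrams — to be routine once this identification is in hand.
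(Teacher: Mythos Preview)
Your proof is correct. The unital part matches the paper's argument almost verbatim: both restrict a character $\chi$ of $\mathcal H_1\otimes\mathcal H_2$ to the unital subalgebras $\mathcal H_1\otimes 1$ and $1\otimes\mathcal H_2$ to obtain surjectivity of $\Phi$, and both conclude that $\Phi$ is a homeomorphism via the continuous-bijection-of-compact-Hausdorff trick.

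The nonunital part is where you diverge. The paper uses the countable weak approximate units \emph{directly}: given $\chi\in\sigma(\mathcal H_1\otimes\mathcal H_2)$ it defines $\chi|_{\mathcal H_1}(f)=\lim_j\chi(f\otimes\eta^{(2)}_j)$ and $\chi|_{\mathcal H_2}(g)=\lim_i\chi(\eta^{(1)}_i\otimes g)$, checks these limits exist and are multiplicative, and recovers $\chi=\chi|_{\mathcal H_1}\otimes\chi|_{\mathcal H_2}$; injectivity on $\sigma_{co}(\mathcal H_1)\times\sigma_{co}(\mathcal H_2)$ is then argued by a separate group-like computation. Your route through the unitalization---embedding $\mathcal H_1\otimes\mathcal H_2$ as a closed ideal in the unital RKHA $\tilde{\mathcal H}_1\otimes\tilde{\mathcal H}_2$, applying the unital case there, and extending characters off the ideal by $a\mapsto\chi(ax)/\chi(x)$---is a genuinely different argument. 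Notably, it never invokes the approximate units, so it actually proves the stronger statement that $(\tilde{sp},\tilde\Phi)$ is monoidal on all of $\textbf{RKHA}$, not just $\textbf{RKHA}_{\textbf{cwau}}$. The tradeoff: the paper's proof is self-contained and makes the role of the cwau hypothesis transparent, while yours reduces cleanly to the unital case plus the standard facts that characters of a closed ideal correspond homeomorphically to characters of the ambient algebra not vanishing on it and that $(X\times Y)^+\cong X^+\wedge Y^+$ for locally compact Hausdorff $X,Y$. The one point you should make explicit is that $\mathcal H_1\otimes\mathcal H_2$ really is the vanishing ideal of the boundary in $\tilde{\mathcal H}_1\otimes\tilde{\mathcal H}_2$; this holds because $(\mathcal H_1\otimes\mathcal H_2)^\perp=\mathbb C\oplus\mathcal H_1\oplus\mathcal H_2$ is exactly the closed span of the boundary kernel sections $\tilde k_{(\infty,y)},\tilde k_{(x,\infty)},\tilde k_{(\infty,\infty)}$.
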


\begin{proof}
    Starting from the unital case, we have already observed that $T|_{\sigma_{co}(\mathcal H_1)} \colon \sigma_{co}(\mathcal H_1) \to \sigma_{co}(\mathcal H_2)$ for unital RKHAs $\mathcal H_1$ and $\mathcal H_2$. This map is also weak to weak continuous since $T$ is a bounded linear map. Clearly $sp(T_1 \circ T_2) = sp(T_1) \circ sp(T_2)$ and $sp(id_\mathcal H) = id_{sp(\mathcal H)}$. Thus $sp$ is a functor from $\textbf{RKHA}_\textbf{u}$ to $\textbf{Top}_\textbf{cpt,Haus}$.

    Next, to show monoidal functoriality of $(sp, \Phi)$, we must show that $\Phi$ is a homeomorphism and a natural transformation from the functor $\times \circ (sp,sp)$ to $sp \circ (- \otimes -)$. Since the cospectrum of an RKHA is a norm bounded set, any net $(\xi^1_i,\xi^2_i)_{i \in I} \subset sp(\mathcal H_1) \times sp(\mathcal H_2)$ converging to $(\xi^1,\xi^2)$ in weak$\times$weak has norm bounded components. Since
$$\lim_{i\to\infty} \braket{f \otimes g}{\Phi(\xi^1_i,\xi^2_i)} = \braket{f}{\xi^1}\braket{g}{\xi^2},$$ and the net $\left(\Phi(\xi^1_i,\xi^2_i)\right)_{i \in I}$ is norm bounded we have convergence to $\Phi(\xi^1,\xi^2)$ in the weak topology. Therefore $\Phi$ is weak$\times$weak to weak continuous.
Injectivity of $\Phi$ follows from $\braket{f \otimes 1}{\xi_1 \otimes \xi_2} =\braket{f}{\xi^1}$ and $\braket{1 \otimes f}{\xi_1 \otimes \xi_2} =\braket{f}{\xi_2}$. For surjectivity, it is easier to work with the space of characters instead of the cospectrum. It suffices to show that every character of $\mathcal H_1 \otimes \mathcal H_2$ is the tensor product of two characters $\chi_1$ and $\chi_2$ of $\mathcal H_1$ and $\mathcal H_2$ respectively. Observe that $\mathcal H_1 \cong \mathcal H_1 \otimes 1 \subset \mathcal H_1 \otimes \mathcal H_2 \supset 1 \otimes \mathcal H_2 \cong \mathcal H_2$, and for $\chi \in \sigma(\mathcal H_1 \otimes \mathcal H_2)$, $\chi(f \otimes g) = \chi(f \otimes 1)\chi(1 \otimes g)$. Hence $\chi$ is determined by its restriction to $\mathcal H_1$ and $\mathcal H_2$, $\chi = \chi|_{\mathcal H_1 \otimes 1} \otimes \chi|_{1 \otimes \mathcal H_2}$. Since the spectra are compact and Hausdorff, $\Phi$ is a homeomorphism.

We now show that $\Phi$ is a natural transformation between the functors $\times \circ (sp,sp)$ and $sp \circ (- \otimes -)$. Let $T \colon \mathcal H_1 \to \mathcal H_2$ and $S \colon \mathcal K_1 \to \mathcal K_2$ be RKHA morphisms. Then $sp(T \otimes S) = T \otimes S|_{\sigma_{co}(\mathcal H_1) \otimes \sigma_{co}(\mathcal K_1)} = sp(T) \times sp(S)$ and the following diagram commutes
$$\begin{tikzcd}
sp(\mathcal H_1) \times sp(\mathcal K_1) \arrow[r, "\Phi_{\mathcal H_1, \mathcal K_1}"] \arrow[d,"sp(T)\times sp(S)"]  & sp(\mathcal H_1 \otimes \mathcal K_1) \arrow[d,"sp(T \otimes S)"]\\
sp(\mathcal H_2) \times sp(\mathcal K_2) \arrow[r, "\Phi_{\mathcal H_1, \mathcal K_1}"]  & sp(\mathcal H_2 \otimes \mathcal K_2)
\end{tikzcd}.$$
The monoidal structure axiom is also easily verified (see diagram 2.23 in \cite{EGNO15}). Finally, $sp(\mathbb C) = \{1\}$ which is the identity in the monoidal category \textbf{Top}.

In the countable weak approximate unit case, functoriality of $\tilde{sp}$ is proven identically to the unital case with the observation that $T|_{\sigma_{co}(\mathcal H) \cup\{0\}}(0)=0$ making it a morphism between pointed topological spaces. Just as before the cospectrum is a norm bounded set. This implies $\Phi \colon \sigma_{co}(\mathcal H_1) \cup\{0\} \times \sigma_{co}(\mathcal H_2)\cup\{0\} \to \sigma_{co}(\mathcal H_1 \otimes \mathcal H_2)\cup \{0\}$ with $\Phi(\xi_1,\xi_2) = \xi_1 \otimes \xi_2$ is continuous. We first show $\Phi$ is surjective. Similarly, it is easier to work with characters instead of the cospectrum. Let $\chi \in \sigma( \mathcal H_1 \otimes \mathcal H_2)$ and $(\eta_i^1)_{i \in \mathbb N}$, $(\eta_i^2)_{i \in \mathbb N}$ be countable weak approximate units for $\mathcal H_1$ and $\mathcal H_2$. Then by weak continuity of $\chi$
$$\chi(f \otimes g) = \lim_{i\to \infty} \lim_{j\to \infty} \chi(f \eta_i^1 \otimes \eta^2_j g) = \left(\lim_{i\to\infty}\chi(\eta_i^1 \otimes g)\right)\left(\lim_{j\to\infty} \chi(f \otimes \eta^2_j)\right)$$
which implies convergence of the following definitions
$$\chi|_{\mathcal H_1}(f) = \lim_{j\to\infty} \chi(f \otimes \eta_j^2)$$
$$\chi|_{\mathcal H_2}(g) = \lim_{i\to\infty} \chi(\eta_i^1 \otimes g).$$
These are both characters since
$$\chi|_{\mathcal H_1}(fg) = \lim_{i\to\infty} \lim_{j\to\infty} \chi(fg \otimes \eta_i^2 \eta_j^2) = \left(\lim_{i\to\infty}\chi(f \otimes \eta_i^2)\right)\left(\lim_{j\to\infty} \chi(g \otimes \eta^2_j)\right) = \chi|_{\mathcal H_1}(f) \chi|_{\mathcal H_1}(g)$$
and $\chi(f \otimes g) = \chi|_{\mathcal H_1}(f) \chi|_{\mathcal H_2}(g)$ implies boundedness. Hence $\chi = \chi|_{\mathcal H_1} \otimes \chi|_{\mathcal H_2}$ and $ \Phi$ is onto. $\Phi$ is also injective when restricted to $\sigma_{co}(\mathcal H_1) \times \sigma_{co}(\mathcal H_2)$. Suppose that $\xi_1 \otimes \xi_2 = \zeta_1 \otimes \zeta_2 \neq 0$ for $\xi_i, \zeta_i \in \sigma_{co}(\mathcal H_i)$. Then
$$\braket{\xi_1}{\zeta_1}\braket{\xi_2}{\zeta_2}=\braket{\xi_1 \otimes \xi_2}{\zeta_1 \otimes \zeta_2} = \braket{\xi_1 \otimes \xi_2}{\xi_1 \otimes \xi_2} = \braket{\xi_1}{\xi_1}\braket{\xi_2}{\xi_2} \neq 0$$
which implies that $\xi_1 = z \zeta_1$ and $\xi_2 =\frac{1}{z} \zeta_2$ for some $z \in \mathbb C \backslash{0}$. Since $\xi_1,\zeta_1 \in \sigma_{co}(\mathcal H_1)$,
$$z^2 \zeta_1 \otimes \zeta_1 = \xi_1 \otimes \xi_1 = \Delta_{\mathcal H_1}(\xi_1) = z \Delta_{\mathcal H_1}(\zeta_1) = z \zeta_1 \otimes \zeta_1$$
which implies that $z=1$.
$\Phi$, however is not injective on $\tilde{sp} \times \tilde{sp}$ since $\Phi(0,\chi) = 0$. By quotienting out by $\Phi^{-1}(0)$, $ \tilde{\Phi} \colon \tilde{sp}(\mathcal H_1) \times \tilde{sp}(\mathcal H_2)/\Phi^{-1}(0) \to \tilde{sp}(\mathcal H_1 \otimes \mathcal H_2)$ is a continuous bijection with the smash product. Since $\tilde{\Phi}$ is a continuous bijection between compact Hausdorff spaces, it is an isomorphism between the product in $\textbf{Top}_{\textbf{cpt,Haus}}^*$ and $\tilde{sp}(\mathcal H_1 \otimes \mathcal H_2)$. Naturality of $\tilde{\Phi}$ between the functors $\times \circ (\tilde{sp},\tilde{sp})$ and $\tilde{sp} \circ (- \otimes -)$ follows by a the same argument as the unital case with a quotient.
\end{proof}

\subsection{The GRS condition}

So far we have a category of RKHAs which is closed under various constructions but we do not know if these lead to non-isomorphic Banach algebras. We will use the GRS and BD conditions to show that \textbf{RKHA} has at least one object for every compact subset of $\mathbb R^n$, $n>0$.

Let us return to the example of RKHAs from compact abelian groups, $\mathcal H_\lambda$, and in particular $G = \mathbb T^n$, $\lambda \colon \mathbb Z^n \to \mathbb R_{>0}$, and the subconvolutive weight $\lambda^{-1}(k) =e^{\tau \norm{k}_p^p}$, $0< \tau$, $0 <p <1$ from example~\ref{subexponential weights}(i). As shown in \cite{DG23,DGM23},
$\sigma(\mathcal H_\lambda) \cong G$ via the Gelfand map since this weight satisfies the GRS condition. We now extend this to some locally compact groups. Similar results are already known for compactly generated groups of polynomial growth and convolution algebras, $L_\omega^1(\hat{G})$ (see \cite{FGL06}). In the abelian case, compactly generated groups of polynomial growth are classified by two natural numbers, $d$, $e$, and a compact abelian group $K$, $\hat{G} \cong \mathbb R^d \times \mathbb Z^e \times K$ (see \cite{HR79}). The authors of \cite{FGL06} used submultiplicativity of the weight to apply the GRS condition to components of this product. Since the weights we consider for nonunital RKHAs may not be submultiplicative, we cannot reduce the application of the GRS condition to the individual components of this decomposition. Hence we will drop the compact group $K$ from our analysis in the following theorem.

\begin{thm}
    \label{thm:GRS}
    Let $\hat{G} \cong \mathbb R^d \times \mathbb Z^e$ and $\lambda \in L^1(\hat{G}) \cap C_0(\hat{G})$ be subconvolutive, symmetric ($\lambda(-\alpha) = \lambda(\alpha)$), and strictly positive. If $\lambda$ satisfies the GRS condition~\eqref{eq:GRS} then $\sigma(\mathcal H_\lambda) \cong G$ by the Gelfand map $\Gamma: G \to \sigma_{co}(\mathcal H_\lambda) \cong \sigma(\mathcal H_\lambda)$.
\end{thm}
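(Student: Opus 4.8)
The plan is to prove the statement in two halves: that every character of $\mathcal H_\lambda$ is an evaluation functional (surjectivity of $\Gamma$), and that $\Gamma$ is a homeomorphism onto its image. Throughout I identify $\sigma(\mathcal H_\lambda) \cong \sigma_{co}(\mathcal H_\lambda)$ via the isomorphism established earlier, so that a character $\chi$ corresponds to the group-like element $\xi_\chi \in \sigma_{co}(\mathcal H_\lambda)$ with $\chi(f) = \braket{f}{\xi_\chi}$. By the earlier lemma $\Gamma\colon G \to \sigma_{co}(\mathcal H_\lambda)\cup\{0\}$, $\Gamma(x)=k_x$, is continuous because $\mathcal H_\lambda \subset C_0(G) \subset C(G)$, and it is injective because the kernel is strictly positive definite (so $\mathcal H_\lambda$ separates points of $G$). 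The content of the theorem is therefore surjectivity, and this is where the GRS condition must enter.

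The first step is to identify the cospectrum with quasicharacters of $\hat G$. Writing $\xi = \hat{\mathcal F}\hat\xi$ and passing to the Fourier domain, the equation $\Delta(\xi)=\xi\otimes\xi$ (which, being an identity of continuous functions on $G\times G$, holds pointwise) translates via formula~\eqref{eq:T_nonunital} into the relation $\frac{\hat\xi(\alpha+\beta)\lambda(\alpha)\lambda(\beta)}{\lambda(\alpha+\beta)}=\hat\xi(\alpha)\hat\xi(\beta)$ for a.e. $(\alpha,\beta)$. Setting $u=\hat\xi/\lambda$ this becomes the exponential equation $u(\alpha+\beta)=u(\alpha)u(\beta)$, while the membership $\xi\in\mathcal H_\lambda$ is exactly $\int_{\hat G}\abs{u}^2\lambda\,d\hat\mu = \norm{\xi}_{\mathcal H}^2 <\infty$. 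A measurable, not a.e.\ vanishing solution of the exponential equation agrees a.e.\ with a continuous quasicharacter of $\hat G = \mathbb R^d\times\mathbb Z^e$ (smooth it by convolution: $u*\phi = c_\phi\,u$), hence $u(\alpha)=e^{\langle c,\alpha\rangle}$ for some $c$ in the complexified dual, with modulus $\abs{u(\alpha)}=e^{\langle s,\alpha\rangle}$ for an $\mathbb R$-linear functional $s$. Thus $u$ is a unitary character (an element of $\widehat{\hat G}=G$) precisely when $s=0$, and in that case $u(\gamma)=\overline{\gamma(x)}$ for a unique $x\in G$, giving $\xi = k_x$ and $\chi = ev_x$. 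Surjectivity of $\Gamma$ is therefore equivalent to showing $s=0$ for every such $\xi$.

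The decisive step is to force $s=0$ from $\int_{\hat G} e^{2\langle s,\alpha\rangle}\lambda(\alpha)\,d\hat\mu(\alpha)<\infty$ using GRS. Along any ray $\{n\gamma\}$, the GRS condition $\lambda(n\gamma)^{1/n}\to 1$ says $\lambda$ decays subexponentially, so $\lambda(n\gamma)\geq e^{-\varepsilon n}$ for large $n$; combined with the symmetry $\lambda(-\gamma)=\lambda(\gamma)$ this already handles the $\mathbb Z^e$ directions cleanly, since there integrability is summability, summability forces the ray terms $e^{2n\langle s,\gamma\rangle}\lambda(n\gamma)$ to tend to zero, and subexponential decay of $\lambda$ makes this impossible unless $\langle s,\gamma\rangle=0$ (the opposite sign being covered by symmetry). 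The main obstacle is the $\mathbb R^d$ directions: there integrability of a continuous function does not force the integrand to decay pointwise, so one must upgrade the ray-wise GRS bound to the failure of $L^2$-integrability over a set of positive measure. I expect the cleanest route is to exploit the $\ast$-algebra structure: by symmetry of $\lambda$, complex conjugation $f\mapsto\bar f$ is an isometric involution making $\mathcal H_\lambda$ a commutative Banach $\ast$-algebra, and GRS is exactly the classical hypothesis guaranteeing that such weighted algebras are symmetric (Hermitian), as in the Gelfand--Raikov--Shilov theorem and its use in \cite{FGL06}. A symmetric algebra has every character respect the involution, which yields $\overline{u(\gamma)}=u(-\gamma)=u(\gamma)^{-1}$, i.e.\ $\abs{u}\equiv 1$ and $s=0$. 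Adapting this symmetry argument to the present $L^2$-type algebra with a non-submultiplicative weight (using subconvolutivity and continuity of $\lambda$ to supply the neighborhood control that plain ray-wise GRS does not) is precisely the delicate point flagged in the discussion preceding the theorem, and is the part I would expect to require the most care.

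Finally, I would deduce the homeomorphism from surjectivity by passing to the unitalization. By the earlier proposition, $\sigma(\tilde{\mathcal H}_\lambda)\cong \sigma_{co}(\mathcal H_\lambda)\cup\{0\}$ is the one-point compactification of $\sigma(\mathcal H_\lambda)$, and the extended map $\tilde\Gamma\colon G\cup\{\infty\}\to \sigma_{co}(\mathcal H_\lambda)\cup\{0\}$ sends $\infty\mapsto 0$. This $\tilde\Gamma$ is continuous: on $G$ by the lemma, and at $\infty$ because $\braket{f}{k_x}=f(x)\to 0$ as $x\to\infty$ for every $f\in C_0(G)$, so $k_x\to 0$ weakly. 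By the surjectivity established above $\tilde\Gamma$ is a continuous bijection between compact Hausdorff spaces, hence a homeomorphism, and restricting to $G$ shows $\Gamma\colon G\to\sigma_{co}(\mathcal H_\lambda)\cong\sigma(\mathcal H_\lambda)$ is a homeomorphism, completing the proof.
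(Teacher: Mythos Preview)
Your outline tracks the paper closely through the identification of $\sigma_{co}(\mathcal H_\lambda)$ with continuous quasicharacters $u=\hat\xi/\lambda$ of $\hat G$, and your homeomorphism argument via the unitalization and one-point compactification is a valid and arguably cleaner alternative to the paper's direct proof (which instead uses powers $(k_x/k(x,x))^n$ to show that weak-$*$ convergent nets $k_{x_i}\to k_x$ force $x_i\to x$).

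The genuine gap is at the step you yourself flag as decisive. You correctly isolate the obstacle in the $\mathbb R^d$ directions---integrability of a continuous function does not force pointwise decay---and then propose to invoke symmetry of the commutative Banach $\ast$-algebra via the classical GRS criterion, citing \cite{FGL06}. But the discussion immediately preceding the theorem explains precisely why that route is not available here: the known symmetry results for weighted algebras rest on submultiplicativity of the weight, which is not assumed, and you concede that adapting the argument ``is the part I would expect to require the most care'' without actually supplying the adaptation. The paper does not pursue the $\ast$-algebra route at all. It argues directly by contradiction: if $\lvert\tilde\phi(\gamma)\rvert>1$, choose a compact neighborhood $F\ni\gamma$ on which $\lvert\tilde\phi\rvert\geq\rho>1$; then GRS gives $\liminf_k\lvert\hat\chi(k\alpha)\rvert^{1/k}\geq\rho$ for each $\alpha\in F$, so the measurable sets $F_n=\{\alpha\in F:\inf_{k\geq n}\lvert\hat\chi(k\alpha)\rvert\geq 1\}$ exhaust $F$ and some $F_{n_0}$ has positive measure. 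One then chooses $k_i\to\infty$ with the dilates $k_iF$ pairwise disjoint; since $\hat G\cong\mathbb R^d\times\mathbb Z^e$ gives $\hat\mu(k_iF_{n_0})\geq\hat\mu(F_{n_0})$ and $\lvert\hat\chi\rvert\geq 1$ on each $k_iF_{n_0}$ with $k_i\geq n_0$, summing contradicts $\hat\chi\in L^1(\hat G)$. This thickening of the ray to a tube of positive measure is exactly the missing mechanism your proposal gestures at but does not provide.
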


\begin{proof}
    First, note that $\Gamma : G \to \sigma_{co}(\mathcal H_\lambda)$ is well-defined even if $\mathcal H_\lambda$ is nonunital by strict positivity of $\lambda$. Next, let $\chi = \hat{\mathcal F}(\hat \chi) \in \sigma_{co}(\mathcal H_\lambda)$ with $\hat\chi \in L^1(\hat G)$ and, using~\eqref{eq:T_nonunital}, observe that
$$\iint_{\hat{G}\times \hat{G}} \frac{\hat{\chi}(\alpha+\beta)}{\lambda(\alpha+\beta)} \sqrt{\lambda(\alpha)\lambda(\beta)} \psi_\alpha(x)\psi_\beta(y)d\hat{\mu} \times \hat{\mu}(\alpha,\beta)=\Delta(\chi)(x,y)$$
$$= \chi(x)\chi(y) = \iint_{\hat{G}\times \hat{G}} \frac{\hat{\chi}(\alpha)\hat{\chi}(\beta)}{\lambda(\alpha)\lambda(\beta)}  \sqrt{\lambda(\alpha)\lambda(\beta)} \psi_\alpha(x)\psi_\beta(y)d\hat{\mu} \times \hat{\mu}(\alpha,\beta).$$
Define $\phi=\frac{\hat{\chi}}{\lambda} \neq 0$. By injectivity of $\hat{\mathcal F} \colon L^1(\hat{G} \times \hat{G}) \to C_0(G\times G)$, locally for almost every $(\alpha,\beta) \in \hat{G}\times \hat{G}$,
$\phi(\beta-\alpha)=\phi(-\alpha)\phi(\beta)$. For $K \subset \hat{G}$ compact and $E = \set{(\alpha,\beta)}{\phi(\beta-\alpha)\neq \phi(-\alpha)\phi(\beta)}$, we can apply Fubini-Tonelli to the indicator function $1_{E \cap K\times K}$. This implies that locally for almost every $\alpha \in \hat{G}$, $\phi(\beta-\alpha)=\phi(-\alpha)\phi(\beta)$ as functions of $\beta$ in $L^1_{loc}(\hat{G})$ (i.e. the space of locally integrable functions with seminorms $\int_{U_k} \abs{\cdot} d\hat{\mu}$ for $U_k \subset U_{k+1}$ compactly included and $\hat{G} = \bigcup_k U_k$).

We will show that $\phi$ is equal locally almost everywhere to a continuous group homomorphism to $\mathbb C$.  Let $S_\alpha(\phi) (\beta) = \phi(\beta-\alpha)$, and observe that $\alpha \to S_\alpha \phi$ is continuous in the $L^1_{loc}(\hat{G})$ topology (since $\hat{G} \cong \mathbb R^d \times \mathbb Z^e$, approximate $\phi$ locally by a $C^\infty_c(\mathbb R^d \times \mathbb Z^e)$ function to prove continuity in every seminorm). Let $\phi = \psi \abs{\phi}$ be the polar decomposition of $\phi$ so that $\psi \colon \hat{G} \to \mathbb T^1 \cup{\{0\}}$. Since $\phi$ is nonzero there exists a compact non-null set $E \subset \hat{G}$ with $\int_E \abs{\phi} d\hat{\mu} \neq 0$. Since $f \mapsto \int_E f \overline{\psi} d\hat{\mu}$ is continuous from $L^1_{loc}(\hat{G})$ to $\mathbb C$, the map $\tilde{\phi} \colon \hat{G} \to \mathbb C$
$$\tilde{\phi}(\alpha) = \frac{1}{\int_E \abs{\phi} d\hat{\mu}} \int_E S_{-\alpha}(\phi) \overline{\psi} d\hat{\mu}$$
is continuous. Furthermore, locally for almost every $\alpha \in \hat{G}$, $\tilde{\phi}(\alpha) = \phi(\alpha)$. Then locally for almost every $\alpha \in \hat{G}$, $S_\alpha(\tilde{\phi}) = S_\alpha(\phi) = \phi(-\alpha)\phi = \tilde{\phi}(-\alpha)\tilde{\phi}$ as functions in $L^1_{loc}(\hat{G})$. By continuity we have $\tilde{\phi}(\alpha+\beta) = \tilde{\phi}(\alpha)\tilde{\phi}(\beta)$ for every $\alpha, \beta \in \hat{G}$. As a consequence of $\tilde{\phi} \neq 0$, $\tilde{\phi}(\alpha) \neq 0$ for any $\alpha\in\hat G$ and $\tilde{\phi}(-\alpha) = \tilde{\phi}(\alpha)^{-1}$.

We may choose the $L^1_{loc}$ and $L^1$ representatives, $\phi = \tilde{\phi}$ and $\hat{\chi}=\lambda \tilde{\phi}$ which are continuous. Finally, we will show by contradiction that $\phi$ is unimodular using the GRS condition. Suppose that $\abs{\phi(\gamma)} = r >1$. Fix a compact neighborhood $F$ of $\gamma$ with $\inf_{\alpha \in F} \abs{\phi(\alpha)} = \rho >1$ and $\sup_{\alpha \in F} \abs{\phi(\alpha)} = M$. Then $\rho^k \leq \abs{\phi(\alpha)} \leq M^k$ for $\alpha \in kF =\set{k\cdot \alpha}{\alpha \in F}$ and there exists an increasing sequence $\{k_i\}_{i=1}^\infty$ such that $\{k_iF\}_{i=1}^\infty$ are disjoint. For $\alpha \in F$, the GRS condition gives
$$\liminf_{k \to \infty} \abs{\hat{\chi}(k\alpha)}^{1/k} = \liminf_{k \to \infty} \lambda(k\alpha)^{1/k} \abs{\phi(\alpha)} \geq \rho >1.$$
Therefore there is an $n_\alpha$ such that $k \geq n_\alpha$ implies that $\abs{\hat{\chi}(k\alpha)} \geq 1$. Define $f_n \colon F \to \mathbb R_{>0}$ by $f_n(\alpha) = \inf_{k\geq n} \abs{\hat{\chi}(k\alpha)}$ and $F_n = f_n^{-1}([1,\infty))$. Then due to the GRS condition $F =\bigcup_{n=1}^\infty F_n$. Since $\hat{\chi}$ is continuous, $f_n$ is an increasing sequence of upper semicontinuous functions, $F_n$ is measurable and $F_{n-1} \subset F_n$. Furthermore, for $k \geq n$ and $\alpha \in kF_n$, $\abs{\hat{\chi}(\alpha)} \geq 1$. Therefore, there exists an $n_0$ such that $\hat{\mu}(F_{n_0}) \neq 0$ and an $i_0$ such that $k_i \geq n_0$ for $i \geq i_0$ and so
$$\sum_{i=i_0}^\infty \hat{\mu}(k_iF_{n_0}) \leq \sum_{i=i_0}^\infty \int_{k_iF_{n_0}} \abs{\hat{\chi}}d\hat{\mu} \leq \norm{\hat{\chi}}_{L^1(\hat{G})} < \infty.$$

Since $\hat{\mu}(kF) \geq \hat{\mu}(F)$ for $\hat{G} \cong \mathbb R^d \times \mathbb Z^e$, $\hat{\chi}$ cannot belong to $L^1(\hat{G})$. This is a contradiction and so our assumption that $\phi$ was not unimodular is false. Hence, the Gelfand map $\Gamma \colon G\to \sigma_{co}(\mathcal H_\lambda)$, $\Gamma(x) = k_x$ is onto. Injectivity and continuity follow from injectivity of $\hat{\mathcal F}$ and $\mathcal H_\lambda \subset C_0(G)$.

Finally, we must show $\Gamma^{-1}$ is continuous. Given a weak-$*$ convergent net $(k_{x_i})_{i \in I} \to k_x$ in $\sigma_{co}(\mathcal H_\lambda)$, we know that $\braket{f}{k_{x_i}} \to f(x)$. It suffices to show that for every open set $ x \in U \subset G \cong \mathbb R^d \times \mathbb T^e$ there is a function $f \in \mathcal H_\lambda$ such that $\abs{f(x)} > sup_{y \notin U} \abs{f(y)}$ as this forces $x_i$ to eventually be in $U$. Such a function is given by $f_n = \left(\frac{k_x}{k(x,x)}\right)^n \in \mathcal H_\lambda$ for $n$ sufficiently large. This follows from the fact that $k_x$ attains its supremum only at $x$ (apply the Cauchy-Schwarz inequality and injectivity of $\hat{\mathcal F}$ to $k_x$ and $k_y$).
\end{proof}

The weights given in example \ref{subexponential weights} also satisfy the BD condition~\eqref{eq:BD} (which implies the GRS condition)
This condition characterizes existence of test functions with compact support (see \cite{Grochenig07}). For an explicit construction of functions with compact support in arbitrarily small open sets and Fourier coefficients with decay faster than $\lambda^{-1}(k)$, see \cite{Ing34}. The BD condition only applies to RKHAs built from locally compact abelian groups. The restriction property below applies to all RKHAs and captures the notion of having compactly supported functions.

Since an RKHA $\mathcal H \subset \mathcal L(X)$ may live on a space $X$ without a topology, we must consider $\mathcal H$ as a space of continuous functions $\mathcal H \subset C(\sigma_{co}(\mathcal H))$ by $f(x) :=\braket{f}{x}$ for $f \in \mathcal H$ and $x \in \sigma_{co}(\mathcal H)$. Since $\sigma_{co}(\mathcal H) \cong \sigma(\mathcal H)$, this is equivalent to the identification $\mathcal H \subset C(\sigma(\mathcal H))$ made earlier. Moreover, $\sigma_{co}(\mathcal H)$ trivially acquires the feature map $\phi: \sigma_{co}(\mathcal H) \to \mathcal H$ by $\phi = id\rvert_{\sigma_{co}(\mathcal H)}$, so we can view $\mathcal H$ as an RKHS on $\sigma_{co}(\mathcal H)$ with reproducing kernel $(x, y) \mapsto \braket{y}{x}$ for $x,y \in \sigma_{co}(\mathcal H)$. We will still denote the sections of this kernel by $k_x$. For $Y \subset \sigma_{co}(\mathcal H)$, let $\iota \colon Y \to \sigma_{co}(\mathcal H)$ denote the inclusion map and for brevity let $\mathcal H(Y) = \mathcal H(k \circ \iota)$ denote the pullback and $T_\iota \colon \mathcal H(Y) \to \mathcal H$ the corresponding isometry. Since $\Delta_{\mathcal H}(T_\iota((k\circ \iota)_y)) = k_{\iota(y)} \otimes k_{\iota(y)}$, $\Delta_{\mathcal H}(T_\iota(\mathcal H(Y))) \subset T_\iota(\mathcal H(Y)) \otimes T_\iota(\mathcal H(Y))$, and we have a simple formula for the cospectrum of a pullback RKHA
$$T_\iota(\sigma_{co}(\mathcal H(Y))) = \set{T_\iota(\xi) \in \mathcal H(Y) \backslash \{0\}}{\Delta_{\mathcal H(Y)}(\xi) = \xi \otimes \xi}$$
$$=\set{T_\iota(\xi) \in \mathcal H(Y) \backslash \{0\}}{\Delta_{\mathcal H}(T_\iota(\xi)) = T_{\iota}(\xi) \otimes T_\iota(\xi)} = \sigma_{co}(\mathcal H) \cap T_{\iota}(\mathcal H(Y)).$$

\begin{defn}
    Let $\mathcal H \subset C(\sigma_{co}(\mathcal H))$ be an RKHA. $\mathcal H$ has the \textbf{compact support property} if for every point $x \in \sigma_{co}(\mathcal H)$ and open neighborhood $U \subset \sigma_{co}(\mathcal H)$ containing $x$ there exists a function $f \in \mathcal H$ such that $f(x) \neq 0$ and $f|_{{\sigma_{co}(\mathcal H)}\backslash U} = 0$. $\mathcal H$ has the \textbf{restriction property} if for all $Y \subset \sigma_{co}(\mathcal H)$ closed, $T_\iota(\sigma_{co}(\mathcal H(Y))) = Y$.
\end{defn}

Since $\sigma_{co}(\mathcal H)$ is a locally compact Hausdorff space in the weak topology, the compact support property implies that for every compact set $S \subseteq \sigma_{co}(\mathcal H)$ with nonempty interior there exists a function $f\in \mathcal H$ whose support is contained in $S$. In fact, the compact support and restriction properties are equivalent.

\begin{proof}
Let $\mathcal H$ be an RKHA with the restriction property and $x \in U \subset \sigma_{co}(\mathcal H)$ a point and open neighborhood. Let $Y = \sigma_{co}(\mathcal H)\backslash U$ with inclusion map $\iota \colon Y \to \sigma_{co}(\mathcal H)$ and corresponding isometry $T_\iota \colon \mathcal H(Y) \to \mathcal H$. Since $T_\iota(\sigma_{co}(\mathcal H(Y))) = \sigma_{co}(\mathcal H) \cap T_\iota(\mathcal H(Y)) = Y$, $x \notin T_\iota(\mathcal H(Y))$, and $T_\iota(\mathcal H(Y)) = \set{f\in \mathcal H}{f|_Y = 0}^\perp$, then there must be a function in $\mathcal H$ that vanishes on $Y$ and not at $x$.

Let $\mathcal H$ be an RKHA with the compact support property. Let $Y \subset \sigma_{co}(\mathcal H)$ be a closed subset. Then for every $x \in \sigma_{co}(\mathcal H) \backslash Y$, since there is a function $f \in \mathcal H$ that vanishes on $Y$ and is nonzero at $x$, $\braket{f}{x} \neq 0$, and so $x \notin \set{f\in \mathcal H}{f|_Y = 0}^\perp = \mathcal H(Y)$. Therefore $x \notin T_\iota(\sigma_{co}(\mathcal H(Y))) = \sigma_{co}(\mathcal H) \cap T_\iota(\mathcal H(Y))$ and $T_\iota(\sigma_{co}(\mathcal H(Y))) = Y$.
\end{proof}

\begin{prop}
Let $\mathcal H_\lambda$ be an RKHA on a locally compact abelian group. Then it has the compact support property iff $\lambda$ satisfies the BD condition \eqref{eq:BD}.
\end{prop}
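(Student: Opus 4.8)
The plan is to prove both implications by reducing to the classical Beurling--Domar theory for the weight $M := \ln \lambda^{-1}$, in terms of which \eqref{eq:BD} reads $\sum_{n\ge 1} M(n\gamma)/n^2 < \infty$ for every $\gamma$ (an integral $\int_1^\infty M(t\gamma)/t^2\,dt<\infty$ along the $\mathbb R^d$ directions). First I would record two reductions. Membership $f = \hat{\mathcal F}\hat f \in \mathcal H_\lambda$ is exactly integrability of $\abs{\hat f}^2/\lambda$, i.e. $\hat f \in L^2_{\lambda^{-1/2}}(\hat G)$; and since $\ln \lambda^{-1/2} = \tfrac12\ln\lambda^{-1}$, condition \eqref{eq:BD} is unchanged if $\lambda^{-1}$ is replaced by $\lambda^{-1/2}$, so I may pass freely between the two. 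Second, \eqref{eq:BD} implies \eqref{eq:GRS}, so under \eqref{eq:BD} the structure theorem for compactly generated groups together with theorem~\ref{thm:GRS} identifies $\sigma_{co}(\mathcal H_\lambda)\cong G$; hence in the forward direction the compact support property on $\sigma_{co}(\mathcal H_\lambda)$ amounts to producing, for each $x_0\in G$ and neighborhood $U$, a function $f\in\mathcal H_\lambda$ with $f(x_0)\neq 0$ and $f$ vanishing off $U$.

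For the forward direction, assume \eqref{eq:BD}. Translation in $G$ corresponds to modulating $\hat f$ by a character, which preserves the $\mathcal H_\lambda$-norm, so it suffices to build, for each neighborhood $U$ of the identity, a nonzero $g\in\mathcal H_\lambda$ supported in $U$. I would invoke the Ingham/Beurling--Domar construction \cite{Ing34,Domar56,Grochenig07}: since $\sum_n M(n\gamma)/n^2<\infty$ along every ray, for any prescribed support size there is a nonzero compactly supported $g$ on $G$ with $\abs{\hat g(\gamma)}\le C\,\lambda(\gamma)$, the Beurling--Domar slack permitting one to retain such decay while shrinking the support into $U$. Since $\lambda\in L^1(\hat G)$, we get $\int \abs{\hat g}^2/\lambda \le C^2\int\lambda\,d\hat\mu<\infty$, so $g\in\mathcal H_\lambda$, and translating to $x_0$ finishes this direction.

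For the reverse direction I would argue the contrapositive: if \eqref{eq:BD} fails, then $\mathcal H_\lambda$ contains no nonzero function vanishing on a nonempty open subset of $G$, whence (taking $x_0\in G\subset\sigma_{co}(\mathcal H_\lambda)$ and a small $U$ with $\overline U\not\supseteq G$) the compact support property fails. Fix $\gamma_0$ with $\sum_n M(n\gamma_0)/n^2=\infty$, let $H=\overline{\langle\gamma_0\rangle}\subset\hat G$ with annihilator $H^\perp\subset G$. Given $f\in\mathcal H_\lambda$ vanishing on an open set, averaging $f$ over cosets of $H^\perp$ yields, for each translate, a function on $G/H^\perp$ whose Fourier transform is the restriction of $\hat f$ to $H$; this restricted transform still decays faster than $\lambda\rvert_H$ and the averaged function still vanishes on an open set. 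Because the weight fails \eqref{eq:BD} along the ray $\{n\gamma_0\}$, the one-dimensional converse Beurling--Domar theorem (a Denjoy--Carleman quasi-analyticity statement) forces each such average to vanish identically; letting the translate range over a separating family gives $\hat f=0$, so $f=0$.

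The hard part will be the slicing step in the reverse direction: making rigorous the reduction of the multivariate condition \eqref{eq:BD} to a single ray by projecting onto $G/H^\perp$, controlling the Fourier decay of the averaged function by $\lambda$ restricted to $\langle\gamma_0\rangle$, and invoking precisely the right one-dimensional quasi-analytic class theorem; the corresponding multidimensional construction in the forward direction (combining the axiswise Ingham functions so their product still beats $\lambda$) is the dual technical difficulty. A secondary bookkeeping point throughout is matching the Hilbert-space weighted condition $\hat f\in L^2_{\lambda^{-1/2}}$ to the $L^1$-weighted formulations in which Beurling--Domar and Ingham are classically stated, which is handled by Cauchy--Schwarz together with $\lambda\in L^1(\hat G)$ recorded above.
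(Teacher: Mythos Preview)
Your plan is workable but substantially more laborious than the paper's, and the extra labor is avoidable. The paper's key device is the identity
\[
f\in\mathcal H_\lambda\ \Longleftrightarrow\ f*\overline f\in\hat{\mathcal F}L^1_{1/\lambda}(\hat G),
\]
which holds because the Fourier transform of $f*\overline f$ is $|\hat f|^2$, so $\int_{\hat G}|\hat f|^2/\lambda<\infty$ is exactly membership of $f*\overline f$ in the Beurling algebra $\hat{\mathcal F}L^1_{1/\lambda}$. Domar's theorem is stated precisely for $\hat{\mathcal F}L^1_{1/\lambda}$ on a general locally compact abelian group: it contains nonzero functions of arbitrarily small support iff $1/\lambda$ satisfies \eqref{eq:BD}. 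For the implication ``compact support $\Rightarrow$ BD'', then, one simply takes $f\in\mathcal H_\lambda\setminus\{0\}$ with support small enough that $\operatorname{supp}(f*\overline f)$ lies in a prescribed open set, and quotes Domar. This replaces your entire slicing/averaging/Denjoy--Carleman program; no reduction to a single ray is needed.

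For the other direction the paper again passes to an $L^1$-weighted space: since $\sqrt\lambda$ also satisfies \eqref{eq:BD}, Domar produces $f\in\hat{\mathcal F}L^1_{1/\sqrt\lambda}$ with support in $U$ and $f(x)\neq 0$; picking any $g\in\mathcal H_\lambda=\hat{\mathcal F}L^2_{1/\sqrt\lambda}$ with $g(x)\neq 0$ and using a weighted Young inequality $\|\hat f*\hat g\|_{L^2_{1/\sqrt\lambda}}\le\|\hat f\|_{L^1_{1/\sqrt\lambda}}\|\hat g\|_{L^2_{1/\sqrt\lambda}}$ gives $fg\in\mathcal H_\lambda$ supported in $U$. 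Your route---building a single $g$ with the pointwise bound $|\hat g|\le C\lambda$ via Ingham---is sound on $\mathbb R$ or $\mathbb Z$, but as you note the multivariate version requires assembling axiswise constructions and checking they jointly beat $\lambda$, which is exactly the difficulty the paper sidesteps by invoking Domar (valid on general LCA groups) rather than Ingham. In short: both of your ``hard parts'' disappear once you route through $\hat{\mathcal F}L^1_{1/\lambda}$ and $\hat{\mathcal F}L^1_{1/\sqrt\lambda}$ via $f\mapsto f*\overline f$.
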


\begin{proof}
Observe that $f \in \mathcal H_\lambda$ iff $f *\overline{f} \in \hat{\mathcal F} L^1_{1/\lambda}(\hat{G})$ and $f,g \in \mathcal H_\lambda$ implies that $f * g \in \hat{\mathcal F} L^1_{1/\lambda}(\hat{G})$. Suppose $\mathcal H_\lambda$ has the compact support property and fix an open set $E \subset G$. Since $G$ is a topological group we can pick a small enough support for $f \in \mathcal H_\lambda \backslash \{0\}$ such that $supp(f * \overline{f}) \subset E$. By \cite{Domar56}, $\lambda$ must satisfy the BD condition.

Suppose that $\lambda$ satisfies the BD condition. Then $\sqrt{\lambda}$ satisfies the BD condition and by Young's convolution identity $\norm{\hat{f} * \hat{g}}_{L^2_{1/\sqrt{\lambda}}(\hat{G})} \leq \norm{f}_{L^1_{1/\sqrt{\lambda}}(\hat{G})} \norm{g}_{L^2_{1/\sqrt{\lambda}}(\hat{G})}$ for $f \in \hat{\mathcal F}L^1_{1/\sqrt{\lambda}}(\hat{G})$ and $g \in \hat{\mathcal F}L^2_{1/\sqrt{\lambda}}(\hat{G})$. For $x \in U \subset G$, choose $f$ and $g$ to be nonzero at $x$ and $\text{supp}(f) \subset U$. Therefore $fg \in \mathcal H_\lambda$, $\text{supp}(fg) \subset U$, and $f(x)g(x) \neq 0$.
\end{proof}

Let $\textbf{RKHA}_{\textbf{cs}}$ denote the subcategory of RKHAs with the compact support property.
\begin{prop}
$\textbf{RKHA}_{\textbf{cs}}$ is closed under $\otimes$, $\oplus$, pullbacks, and pushouts.
\end{prop}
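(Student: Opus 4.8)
The plan is to prove all four closure statements through the equivalence between the compact support property and the restriction property just established, together with the fact that \textbf{RKHA} is already closed under $\otimes$, $\oplus$, pullbacks, and pushouts (Proposition~\ref{RKHA closed under operations}); so only the compact support property needs to be transported. I would handle $\otimes$ and $\oplus$ by exhibiting compactly supported functions directly, and handle the pullback and pushout by reducing to the cospectrum of the underlying RKHA and the restriction property.

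For the tensor product I would use the homeomorphism $\Phi$ of Theorem~\ref{thm:spec}, which identifies $\sigma_{co}(\mathcal H_1 \otimes \mathcal H_2)$ with $\sigma_{co}(\mathcal H_1) \times \sigma_{co}(\mathcal H_2)$ (the smash product away from the basepoint in the pointed/\textbf{cwau} case). Given a point $\xi_1 \otimes \xi_2$ and an open neighborhood $U$, shrink $U$ to a box $U_1 \times U_2$ and invoke the compact support property of each factor to obtain $f_i \in \mathcal H_i$ with $f_i(\xi_i) \neq 0$ and $f_i$ vanishing off $U_i$. Then $f_1 \otimes f_2$ satisfies $(f_1 \otimes f_2)(\xi_1 \otimes \xi_2) = f_1(\xi_1) f_2(\xi_2) \neq 0$ and vanishes off $U_1 \times U_2 \subseteq U$, since $\braket{f_1 \otimes f_2}{\eta_1 \otimes \eta_2} = f_1(\eta_1) f_2(\eta_2)$ is zero as soon as one coordinate leaves its box. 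For $\oplus$ the spectrum is the disjoint union $\sigma_{co}(\mathcal H_1) \sqcup \sigma_{co}(\mathcal H_2)$ with each summand clopen, so a compactly supported function at a point of $\sigma_{co}(\mathcal H_1)$, extended by $0$ on the second summand, works immediately.

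For the pullback along $\phi \colon S \to X$ I would first note that the cospectrum formula preceding this proposition holds verbatim for a general $\phi$, not only for inclusions, because its proof uses only that the image $V = T_\phi(\mathcal H(k \circ \phi)) = \overline{\text{span}\set{k_{\phi(s)}}{s \in S}}$ is $\Delta$-invariant, which follows from $\Delta(k_{\phi(s)}) = k_{\phi(s)} \otimes k_{\phi(s)}$. Thus $T_\phi(\sigma_{co}(\mathcal H(k \circ \phi))) = \sigma_{co}(\mathcal H) \cap V$. Since $V$ is a norm-closed subspace it is weakly closed, so $Y := \sigma_{co}(\mathcal H) \cap V$ is relatively closed in $\sigma_{co}(\mathcal H)$ and $V = \overline{\text{span}\set{k_x}{x \in Y}}$; hence $\mathcal H(k \circ \phi) \cong \mathcal H(Y)$ as RKHAs, both being the induced structure on $V$. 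It then remains to show that restriction to a closed subset preserves the restriction property: for closed $Z \subseteq \sigma_{co}(\mathcal H(Y)) \cong Y$, functoriality of the pullback gives $\mathcal H(Y)(Z) = \mathcal H(Z)$ with $Z$ closed in $\sigma_{co}(\mathcal H)$ (as $Y$ is closed), and the restriction property of $\mathcal H$ recovers $Z$. This reduces the general pullback to the restriction property of $\mathcal H$ itself.

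The pushout is where I expect the real difficulty. Here $\mathcal H(k_\phi) \cong \tilde{\mathcal H} = \set{f \in \mathcal H}{f(x_1) = f(x_2) \text{ whenever } \phi(x_1) = \phi(x_2)}$, a closed subalgebra on which $\sigma_{co}$ appears as a quotient $q \colon \sigma_{co}(\mathcal H) \to \sigma_{co}(\tilde{\mathcal H})$, and the compact support property must be produced for functions that are constant on the fibers of the identification. The naive candidate, projecting a compactly supported $g \in \mathcal H$ to $P_{\tilde{\mathcal H}} g$, does not obviously preserve support: $P_{\tilde{\mathcal H}} k_x$ is fiber-constant but is not supported on the fiber of $x$, so the orthogonal projection can spread mass outside the saturation of a small open set. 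The crux is therefore to control the support of a fiber-constant function. I would restrict to quotient maps $\phi$ (as in the continuous-category proposition) so that $q$ is a proper quotient with compact fibers, cover a fiber $q^{-1}(\zeta)$ lying inside a saturated open set $q^{-1}(U)$ by finitely many compactly supported functions of $\mathcal H$, and then enforce fiber-constancy while keeping the support inside $q^{-1}(U)$ and the value at $\zeta$ nonzero. Making the support bookkeeping survive the passage to $\tilde{\mathcal H}$ is the main obstacle, and I expect it to rely on $\tilde{k}$ being built to be fiber-constant together with properness of $q$.
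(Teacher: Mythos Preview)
Your arguments for $\otimes$, $\oplus$, and the pullback are correct and agree with the paper's (much terser) treatment. The paper dismisses $\otimes$ and $\oplus$ as ``immediate'' and handles the pullback essentially as you do: since $T_\phi$ is an isometry, it and $T_\phi^*$ are weak--weak continuous, so $T_\phi$ embeds $\sigma_{co}(\mathcal H(k\circ\phi))$ homeomorphically as a subset of $\sigma_{co}(\mathcal H)$; hence the pullback is a restriction RKHA, and a restriction of an RKHA with the compact support property obviously inherits it (take any $f\in\mathcal H$ supported in an ambient open set and restrict). One minor caveat: your tensor argument invokes Theorem~\ref{thm:spec}, which is stated only for unital or \textbf{cwau} RKHAs, so strictly speaking the identification $\sigma_{co}(\mathcal H_1\otimes\mathcal H_2)\cong\sigma_{co}(\mathcal H_1)\times\sigma_{co}(\mathcal H_2)$ needs a word in the general \textbf{cs} setting.

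The gap is the pushout. You correctly see that projecting a compactly supported $g\in\mathcal H$ onto $\tilde{\mathcal H}$ can destroy its support, and your finite-cover-and-glue idea has no mechanism to produce a \emph{fiber-constant} function with controlled support---that route does not close. The paper avoids this entirely by verifying the \emph{restriction property} of $\mathcal H(k_\phi)$ rather than the compact support property. Given closed $Y\subseteq\sigma_{co}(\mathcal H(k_\phi))$, pull it back to the closed set $T_\phi^{-1}(Y)\subseteq\sigma_{co}(\mathcal H)$ (closed because $T_\phi$ is weakly continuous), and use the commuting square
\[
\begin{tikzcd}
T_\phi^{-1}(Y) \arrow[r,"T_\phi"] \arrow[d,"\Gamma_{T_\phi^{-1}(Y)}"'] & Y \arrow[d,"\Gamma_Y"] \\
\sigma_{co}\bigl(\mathcal H(T_\phi^{-1}(Y))\bigr) \arrow[r,"T_\phi"] & \sigma_{co}\bigl(\mathcal H(k_\phi)(Y)\bigr)
\end{tikzcd}
\]
The restriction property of $\mathcal H$ makes the left vertical map onto; surjectivity of $T_\phi$ makes the bottom horizontal map onto; hence $\Gamma_Y$ is onto, which is exactly the restriction property for $\mathcal H(k_\phi)$. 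The moral: closed sets pull back well along $T_\phi$, whereas compact supports do not push forward, so the restriction-property reformulation is the one suited to pushouts.
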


\begin{proof}
Closure under $\otimes$ and $\oplus$ is immediate.

Let $\phi \colon S \to X$, $T_\phi \colon \mathcal H(k \circ \phi) \to \mathcal H(k)$ be a pullback. Since $T_\phi$ is an isometry, its restriction $T_\phi \colon \sigma_{co}(\mathcal H(k \circ \phi)) \to \sigma_{co}(\mathcal H(k))$, and partial inverse $T_\phi^* \colon Im(T_\phi) \to \sigma_{co}(\mathcal H(k \circ \phi))$ are weak-$*$ continuous. Hence, $T_\phi$ defines an inclusion map and $\mathcal H(k \circ \phi) \subset \mathcal L(\sigma_{co}(\mathcal H(k \circ \phi)))$ is obtained from $\mathcal H \subset \mathcal L(\sigma_{co}(\mathcal H))$ by the restriction. The restriction RKHA clearly satisfies the compact support property.

Let $\phi \colon X \to S$, $T_\phi \colon \mathcal H(k) \to \mathcal H(k_\phi)$ be a pushout. Let $Y \subset \sigma_{co}(\mathcal H(k_\phi))$ be closed, then $T_\phi^{-1}(Y) \subset \sigma_{co}(\mathcal H(k))$ is closed. The following diagram commutes
$$\begin{tikzcd}
T_\phi^{-1}(Y) \arrow[r,"T_\phi|_{T_\phi^{-1}(Y)}"] \arrow[d, "\Gamma_{T_\phi^{-1}(Y)}"] & Y \arrow[d,"\Gamma_Y"] \\ \sigma_{co}(\mathcal H(T^{-1}_\phi(Y))) \arrow[r,"T_\phi"] & \sigma_{co}(\mathcal H(k_\phi)(Y))
\end{tikzcd}.$$
Since $T_\phi$ is onto, its restriction to $T_\phi \colon \sigma_{co}(\mathcal H(T^{-1}_\phi(Y))) \to \sigma_{co}(\mathcal H(k_\phi)(Y))$ is also onto. $\mathcal H$ satisfies the Gelfand property by assumption, and so $\Gamma_{T^{-1}_\phi(Y)}$ is onto. This forces $\Gamma_Y$ to be onto. Since $Y \subset \sigma_{co}(\mathcal H(k_\phi))$, $\Gamma_Y$ is a homeomorphism between $Y$ and its image, hence $\sigma(\mathcal H(k_\phi)(Y)) \cong Y$.
\end{proof}

Observe that the weights in example \ref{subexponential weights} satisfy the BD condition, hence the GRS condition. Combining these examples with the many constructions above, \textbf{RKHA} contains objects $\mathcal H$ with $\sigma(\mathcal H)$ isomorphic to any locally compact subset of $\mathbb R^n$.

\subsection{Banach algebra quotients of RKHAs}
We have focused on constructions of RKHAs stemming from the RKHS category. We now address Banach algebra constructions and their compatibility with RKHAs. First, subalgebras of RKHAs yield new RKHAs. Let $\mathcal M \subset \mathcal H \subset \mathcal L(X)$ be a subalgebra of an RKHA. Then $\overline{\mathcal M}^{\norm{\cdot}} \subset \mathcal L(X)$ is an RKHA.

\begin{proof}
Let $(f_n)_{n=1}^\infty, (g_n)_{n=1}^\infty \subset \mathcal M$ be sequences converging to $f,g \in \overline{\mathcal M}^{\norm{\cdot}}$. Then clearly, $\Delta^*(f_n \otimes g_n)$ converges to $fg \in \overline{\mathcal M}^{\norm{\cdot}}$. Therefore $\Delta^*|_{\overline{\mathcal M} \otimes \overline{\mathcal M}} \colon \overline{\mathcal M} \otimes \overline{\mathcal M} \to \overline{\mathcal M}$ is bounded and implements point-wise multiplication.
\end{proof}

The next construction from Banach algebras we consider is the quotient Banach algebra. Let $\mathcal H \subset \mathcal L(X)$ be a unital RKHA and consider the closed ideal $I_Y = \set{f \in \mathcal H}{f|_Y = 0} = \mathcal H(Y)^\perp$ for a subset, $Y$, of $X$. This provides two different quotients of $\mathcal H$ as a Hilbert space and a Banach algebra. As a Hilbert space $\mathcal H(Y) \cong \mathcal H/I_Y \subset \mathcal L(Y)$ by $\mathcal H/I_Y \ni [f] \mapsto g \in \mathcal H(Y)$ where $g$ is the unique element of $[f]$ such that $\norm{g}_\mathcal H = \inf_{h \in [f]} \norm{h}_\mathcal H$ and for all $g \in [f]$, $g|_Y = f|_Y$. We may also build the quotient Banach algebra $B= \mathcal H/I_Y$ where $\norm{[f]}_B = \inf_{g \in [f]} \norm{M_g}_{op}$. Since $\mathcal H(Y)$ is a unital RKHA it has a Banach algebra norm which may be different than the norm for $B$. However, these Banach algebra norms will be equivalent since
$$\frac{1}{C}\norm{f}_{\mathcal H(Y)} = \frac{1}{C} \inf_{g \in [f]} \norm{g}_\mathcal H \leq \inf_{g \in [f]} \norm{M_g}_{op} \leq C \inf_{g \in [f]} \norm{g}_\mathcal H =C\norm{f}_{\mathcal H(Y)} \quad \forall f \in \mathcal H(Y).$$
Hence, the quotient Banach algebra and restriction RKHA generate Banach algebras with equivalent norms.

Finally, we address the metric topology induced by $\mathcal H$ on the spectrum $\sigma(\mathcal H) \cong \sigma_{co}(\mathcal H)$. Define the metric $d \colon \sigma(\mathcal H) \times \sigma(\mathcal H) \to \mathbb R$ by $d(x,y) = \norm{k_x - k_y}$. In general, the metric topology and weak-$*$ topology may not agree on $\sigma(\mathcal H)$.
\begin{prop}\label{metric topology}
Let $\mathcal H$ be an RKHA. The metric topology and weak-$*$ topology on $\sigma(\mathcal H)$ agree iff the unit ball of $\mathcal H$ is equicontinuous with respect to the weak-$*$ topology on $\sigma(\mathcal H)$. These are both equivalent to continuity of $\kappa(x) =k(x,x)$ in the weak-$*$ topology.
\end{prop}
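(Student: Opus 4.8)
The plan is to reduce all three conditions to a single statement about the kernel sections. Under the identification $\sigma(\mathcal H) \cong \sigma_{co}(\mathcal H)$, a point $x$ corresponds to the element $k_x \in \mathcal H$, and the weak-$*$ topology is exactly the topology of weak convergence of these sections: a net $x_i \to x$ converges weak-$*$ iff $\braket{f}{k_{x_i}} \to \braket{f}{k_x}$ for every $f \in \mathcal H$, i.e. $k_{x_i} \to k_x$ weakly in $\mathcal H$. Since norm convergence implies weak convergence, the metric topology from $d(x,y) = \norm{k_x - k_y}$ always refines the weak-$*$ topology. Hence the two agree if and only if the converse implication holds, which I record as the condition $(\star)$: every weak-$*$ convergent net $x_i \to x$ satisfies $\norm{k_{x_i} - k_x} \to 0$. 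I will show that both the equicontinuity condition and weak-$*$ continuity of $\kappa$ are each equivalent to $(\star)$, so that all three statements coincide.

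For the equicontinuity equivalence, the key identity is that, by the Riesz representation of the unit ball, $\sup_{\norm{f} \le 1} \abs{f(y) - f(x)} = \sup_{\norm{f} \le 1} \abs{\braket{f}{k_y - k_x}} = \norm{k_y - k_x} = d(x,y)$. Consequently, equicontinuity of the unit ball at $x$ with respect to the weak-$*$ topology says precisely that for every $\varepsilon > 0$ there is a weak-$*$ neighborhood $U$ of $x$ on which $d(x,\cdot) < \varepsilon$; that is, the identity map $(\sigma(\mathcal H), \text{weak-}\ast) \to (\sigma(\mathcal H), d)$ is continuous at $x$, which (ranging over all $x$) is exactly $(\star)$.

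For $\kappa$, the direction $(\star) \Rightarrow$ continuity is immediate: if $\norm{k_{x_i} - k_x} \to 0$ then $\kappa(x_i) = \norm{k_{x_i}}^2 \to \norm{k_x}^2 = \kappa(x)$. The substantive direction is the converse, which is a Radon--Riesz (Kadec--Klee) argument in the Hilbert space $\mathcal H$. Expanding, $\norm{k_{x_i} - k_x}^2 = \norm{k_{x_i}}^2 - 2\operatorname{Re}\braket{k_{x_i}}{k_x} + \norm{k_x}^2$. Weak convergence alone, tested against the fixed vector $k_x$, forces $\braket{k_{x_i}}{k_x} \to \braket{k_x}{k_x} = \kappa(x)$, hence $\operatorname{Re}\braket{k_{x_i}}{k_x} \to \kappa(x)$; while continuity of $\kappa$ gives $\norm{k_{x_i}}^2 = \kappa(x_i) \to \kappa(x)$. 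Substituting, the right-hand side tends to $\kappa(x) - 2\kappa(x) + \kappa(x) = 0$, so $(\star)$ holds.

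The only point requiring care is that the weak-$*$ topology need not be first countable, so every continuity and topology-comparison step must be phrased with nets rather than sequences; each step above does so, and each uses only the inner-product expansion and weak testing against a single fixed vector, so no metrizability is assumed. The one genuine input is the geometry of Hilbert space, namely that weak convergence together with convergence of norms yields strong convergence, and I expect this Radon--Riesz step to be the crux of the argument, the remainder being the bookkeeping that identifies the three conditions with $(\star)$.
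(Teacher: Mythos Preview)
Your proof is correct and uses the same two core ingredients as the paper: the Riesz identity $\sup_{\norm{f}\le 1}\abs{f(y)-f(x)}=\norm{k_y-k_x}$ and the Radon--Riesz step $\norm{k_{x_i}-k_x}^2=\kappa(x_i)-2\operatorname{Re}\braket{k_{x_i}}{k_x}+\kappa(x)$. The organization differs slightly. The paper argues in a cycle (metric $=$ weak-$*$) $\Rightarrow$ equicontinuity $\Rightarrow$ $\kappa$ continuous $\Rightarrow$ (metric $=$ weak-$*$), and for the middle implication it uses the RKHA-specific fact that $\set{k_y}{y\in\sigma(\mathcal H)}$ is uniformly norm bounded by $\norm{\Delta}_{op}^2$, so that this family is itself equicontinuous; one then tests equicontinuity at the moving section $k_{x_i}$ to deduce $\kappa(x_i)\to\kappa(x)$. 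You instead observe directly that equicontinuity of the unit ball at $x$ is \emph{literally} the statement that $d(x,\cdot)$ is small on a weak-$*$ neighborhood, so equicontinuity $\Leftrightarrow(\star)$ without any appeal to the kernel bound. This makes your argument marginally more general (it works for any RKHS once one has a set of kernel sections to play the role of $\sigma(\mathcal H)$) and avoids the one place where the paper invokes the RKHA structure in this proof.
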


\begin{proof}
If the metric topology and weak-$*$ topology agree then for all $f \in (\mathcal H)_1$
$$\abs{f(x)-f(y)} = \abs{\braket{f}{k_x - k_y}} \leq d(x,y).$$
Fix $y \in \sigma(\mathcal H)$, $\varepsilon >0$, and pick $U=\set{x\in \sigma(\mathcal H)}{d(x,y) <\varepsilon}$. Then clearly $\abs{f(x)-f(y)} < \varepsilon$.

Now suppose that $(\mathcal H)_1$ is equicontinuous. Since the kernel functions of an RKHA are uniformly norm bounded, they also form a equicontinuous family. Let $(x_i)_{i \in I}$ be a net in $\sigma(\mathcal H)$ converging to $x$ in the weak-$*$ topology. For $\varepsilon >0$ pick a weak-$*$ neighborhood $U$ of $x$ such that $\abs{k_y(x)-k_y(x_i)} < \varepsilon$ for all $x_i \in U$. In particular, $\abs{k_{x_i}(x) - k_{x_i}(x_i)} < \varepsilon$ for all $x_i \in U$. Since $x_i$ is eventually in $U$ and $\lim_{i \to \infty} k_{x_i}(x) = k(x,x)$ then $\abs{k(x,x) - k_{x_i}(x_i)} < \varepsilon$ for all $x_i \in U$. Therefore the kernel is jointly weak-$*$ continuous and so $\lim_{i \to \infty} \norm{k_x - k_{x_i}}^2 = \lim_{i \to \infty} k(x,x)+k(x_i,x_i)-k(x,x_i)-k(x_i,x) = 0$.
\end{proof}

\subsection*{Acknowledgments} DG acknowledges support from Basic Research Office of the U.S.\ Department of Defense under Vannevar Bush Faculty Fellowship grant N00014-21-1-2946. MM was supported as a postdoctoral fellow under the same grant. The authors thank Hans Feichtinger for feedback on an earlier draft of this paper.

\pagebreak

\bibliographystyle{alpha}
\bibliography{bibliography}

\end{document}